\documentclass[11pt,a4paper]{article}
\usepackage{amscd}
\usepackage{bbm}
\usepackage{mathrsfs}
\usepackage{amsfonts}
\usepackage{pb-diagram}
\usepackage{amsmath}
\usepackage{amssymb}
\usepackage{amsthm}
\usepackage{xcolor}
\makeatletter
\renewcommand\thesection{\@arabic\c@section}
\renewcommand\thesubsection{\thesection.\@arabic\c@subsection}
\makeatother

\newtheorem{theorem}{Theorem}[section]
\newtheorem{lemma}[theorem]{Lemma}
\newtheorem{corollary}[theorem]{Corollary}
\newtheorem{proposition}[theorem]{Proposition}
\newtheorem{definition}{Definition}[section]
\newtheorem{example}{Example}[section]
\newtheorem{remark}{Remark}[section]
\topmargin=-5mm \textheight 216mm  \textwidth=165mm
\oddsidemargin=-4mm \evensidemargin=-4mm
\begin{document}
\title{On the first Hochschild cohomology of admissible algebras
\thanks{Project supported by the National Natural Science Foundation of China (No.11271318, No.11171296 and No.J1210038) and the Specialized Research Fund for the Doctoral Program of Higher Education of China (No. 20110101110010)} }
\author{Fang Li\ \ and\ \ Dezhan Tan\\
Department of Mathematics, Zhejiang University\\ Hangzhou, Zhejiang 310027, China\\
fangli@zju.edu.cn;\ \ \ zhijichuanqing@163.com} \maketitle
\begin{abstract}
Our aim in this paper is to investigate the first Hochschild
cohomology of {\em admissible algebras} which can be seen as a
generalization of basic algebras. For this purpose, we study
differential operators on an admissible algebra. Firstly,
differential operators from a path algebra to its quotient algebra
as an admissible algebra are discussed. Based on this discussion,
the first cohomology with admissible algebras as coefficient modules
is characterized, including their dimension formula. Besides, for
planar quivers, the $k$-linear bases of the first cohomology of
acyclic complete monomial algebras and acyclic truncated quiver
algebras are constructed over the field $k$ of characteristic $0$.
\end{abstract}

\textbf{Keywords:} quiver, admissible algebra, differential operators, Hochschild
cohomology.

\textbf{2010 Mathematics Subject Classifications}: 16E40,\ 16G20,\ 16W25,\ 16S32

\section{{ Introduction }}

The Hochschild cohomology of algebras is invariant under Morita
equivalence. Hence it is enough to consider basic connected algebras
when the algebras are Artinian. Let $\Gamma=(V,E)$ be a finite
connected quiver where $V$ (resp. $E$) is the set of vertices (resp.
arrows) in $\Gamma$. Let $k$ be an arbitrary field and $k\Gamma$ be
the corresponding path algebra. Denote by $R$ the two-sided ideal of
$k\Gamma$ generated by $E$. Recall that an ideal $I$ is called {\em
admissible} if there exists $m\geq2$ such that $R^m\subseteq
I\subseteq R^2$ (See \cite{ASS}). According to the Gabriel theorem,
a finite dimensional basic $k$-algebra over an algebraically closed
field $k$ is in the form of $k\Gamma/I$ for a finite quiver $\Gamma$
and an admissible idea $I$.

An Artinian algebra is called a {\em monomial algebra} (see
\cite{ARS}) if it is isomorphic to a quotient $k\Gamma/I$ of a path algebra $k\Gamma$
 for a finite quiver $\Gamma$ and an idea $I$ of
$k\Gamma$ generated by some paths in $\Gamma$. In particular, denote
by $k^n\Gamma$ the ideal of $k\Gamma$ generated by all paths of
length $n$. Then the monomial algebra $k\Gamma/k^n\Gamma$ is called
the {\em $n$-truncated quiver algebra}.

The study of Hochschild cohomology of quiver related algebras
started with the paper of Happel in 1989 \cite{Ha}, who gave the
dimensions of Hochschild cohomology of arbitrary orders of path
algebras for acyclic quivers.  Afterwards, there have been extensive
studies on the Hochschild cohomology of quiver related algebras such
as truncated quiver algebras, monomial algebras, schurian algebras
and 2-nilpotent algebras
\cite{Cibils}\cite{Cib}\cite{Ba}\cite{Z}\cite{L}\cite{St}\cite{Sa}\cite{XHJ}\cite{PS}\cite{ACT}\cite{Sa2}.
In \cite{Ha}, a minimal projective resolution of a finite
dimensional algebra $A$ over its enveloping algebra is described in
terms of the combinatorics when the field $k$ is an algebraically
closed field.  In these papers listed above, the authors use this
kind of  projective resolution or its improving version to compute
the Hochschild cohomology.

 In \cite{GL}, the authors applied an explicit and combinatorial method to study
$HH^1(k\Gamma)$. In this paper, we improve the method in \cite{GL}
to the case of algebras with relations in order to study the
$HH^1(k\Gamma/I)$ where $k\Gamma/I$ is an admissible algebra. This
way does not depend on projective resolution and the requirement of
$k$ being an algebraically closed field. Using this method, we can
obtain some structural results which were not arisen by the classical method in the above listed papers.

If $I\subseteq R^2$ holds for a two-sided ideal $I$, we call
$k\Gamma/I$ an {\em admissible algebra} (see Definition
\ref{admissible algebra}). So finite-dimensional basic algebras are
always admissible algebras. We will give Proposition
\ref{proposition 2.2}, which shows admissible algebras, including
basic algebras, possess the similar characterization of monomial
algebras and truncated quiver algebras, although it is not graded.
From this point of view,  admissible algebra is motivated to unify
and generalize basic algebra and monomial algebra.

In the following, we always assume that $k\Gamma/I$ is an admissible
algebra. This paper includes three sections except for the
introduction. In Section 2, we introduce the basic definitions which
are used in this paper. In particular, we define the notion of an
\emph{acyclic} admissible algebra, which can be thought as a
generalization of the notion of an acyclic quiver. A sufficient and
necessary condition is obtained for a linear operator from $k\Gamma$
to $k\Gamma/I$ to be a differential operator. Next, we give a
standard basis of $\textsl{Diff}(k\Gamma,k\Gamma/I)$.

In Section 3, we investigate $H^1(k\Gamma,k\Gamma/I)$). In Eq.(\ref{3.23}),  a dimension formula of $H^1(k\Gamma,k\Gamma/I)$ is given for a finite dimensional admissible algebra. Moreover, in Theorem
\ref{thm 3.7}, we construct a basis of $H^1(k\Gamma,k\Gamma/I)$ when $\Gamma$ is planar and $k\Gamma/I$ is an acyclic admissible algebra.

In Section 4, we characterize $HH^1(k\Gamma/I)$. In Eq.(\ref{5.28}),
we give the dimension formula of $HH^1(k\Gamma/I)$ for any finite
dimensional admissible algebras $k\Gamma/I$. Moreover, we apply this
method to  complete monomial algebras and truncated quiver algebras.
In Theorems \ref{tm 4.7} and \ref{thm 5.3}, we construct $k$-linear
bases of their first cohomology groups  under certain conditions.
The Hochschild cohomology of monomial algebras and truncated quiver
algebras has been studied in
\cite{Cib}\cite{Z}\cite{L}\cite{XHJ}\cite{S}\cite{XHJ}. Our results
in Section 4 can be seen as the generalization of those
corresponding conclusions in the listed references above. In the
same section,  two examples of admissible algebras are given which
are not monomial algebras. Their first Hochschild cohomology is
characterized using our theory.

\section{$k$-linear basis of $\emph{Diff}(k\Gamma,k\Gamma/I)$ }
We always assume  $\Gamma=(V, E)$ is  a finite connected quiver, where $V$
(resp. $E$) is the set of vertices (resp. arrows) in $\Gamma$. For a
path $p$, denote its starting vertex by $t(p)$, called the
\textbf{tail} of $p$, and the ending point by $h(p)$, called the
\textbf{head} of $p$. For two paths $p$ and $q$, if $t(p)=t(q)$ and
$h(p)=h(q)$, we say $p$ and $q$ to be {\bf parallel}, denote as
$p\parallel q$. Denote by $\mathscr{P}=\mathscr{P}_{\Gamma}$ the set
of paths in a quiver $\Gamma$ including its vertices; denote by
$\mathscr{P}_A$ the set of its acyclic paths. Trivially,  $\Gamma$
is acyclic if and only if $\mathscr{P}_{\Gamma}\backslash
V=\mathscr{P}_A$. Throughout this paper, we always assume quivers
are finite and connected.
\begin{definition}\label{admissible algebra}
Suppose $\Gamma=(V,E)$ is a quiver, $I$ is a
two-sided ideal of $k\Gamma$, we call the quotient algebra
$k\Gamma/I$ an \textbf{admissible algebra} if $I\subseteqq R^2$ where $R$ denotes the two-sided ideal of
$k\Gamma$ generated by $E$.
\end{definition}

\begin{proposition}\label{proposition 2.2}
Suppose $k\Gamma/I$ is an admissible algebra, then there exists a subset $\mathscr{P}^{'}$ of $\mathscr{P}$ such
that $V\cup E\subseteq \mathscr{P}^{'}$ and
$\mathscr{Q}=\{\overline{x}|x\in\mathscr{P}^{'}\}$ forms a basis of $k\Gamma/I$ for $\overline{x}=x+I$
\end{proposition}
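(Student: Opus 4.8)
The plan is to exploit the fact that the set of all paths $\mathscr{P}$ is a $k$-basis of $k\Gamma$, so that the images $\{\overline{p}\mid p\in\mathscr{P}\}$ form a spanning set of $k\Gamma/I$; a basis of $k\Gamma/I$ can then be sifted out of this spanning set, and the whole point is to arrange that the chosen paths include every vertex and every arrow. Since $V\cup E\subseteq\mathscr{P}$, the two things I need are (i) that the images of $V\cup E$ are already linearly independent in $k\Gamma/I$, and (ii) that any linearly independent family drawn from the path images extends, again using only path images, to a basis of $k\Gamma/I$.

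For (i) I would grade $k\Gamma$ by path length, writing $k\Gamma=\bigoplus_{\ell\geq 0}k\Gamma_\ell$ where $k\Gamma_\ell$ is spanned by the paths of length $\ell$; with this notation $R=\bigoplus_{\ell\geq 1}k\Gamma_\ell$ and $R^2=\bigoplus_{\ell\geq 2}k\Gamma_\ell$, while $kV\oplus kE=k\Gamma_0\oplus k\Gamma_1$. The hypothesis $I\subseteq R^2$ is exactly what makes the composite $k\Gamma_0\oplus k\Gamma_1\hookrightarrow k\Gamma\twoheadrightarrow k\Gamma/I$ injective: if a $k$-combination of vertices and arrows lay in $I$, it would lie in $R^2\cap(k\Gamma_0\oplus k\Gamma_1)=0$, forcing all coefficients to vanish since $V\cup E$ is part of a basis of $k\Gamma$. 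Hence $\{\overline{x}\mid x\in V\cup E\}$ is linearly independent, which gives (i).

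For (ii) I would invoke the standard exchange argument: given a spanning set $S$ (here $S=\{\overline{p}\mid p\in\mathscr{P}\}$) and a linearly independent subset $L\subseteq S$ (here the images of $V\cup E$), there is a basis $B$ with $L\subseteq B\subseteq S$. One adds path images one at a time, keeping those not already in the span of the previously chosen elements and discarding the rest, until the span exhausts $k\Gamma/I$; choosing for each vector of $B$ a single representing path and collecting these into $\mathscr{P}'$ produces the desired index set $\mathscr{P}'\subseteq\mathscr{P}$ with $V\cup E\subseteq\mathscr{P}'$ and $\mathscr{Q}=\{\overline{x}\mid x\in\mathscr{P}'\}=B$ a basis.

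The only genuine subtlety is that nothing in the hypotheses bounds $I$ from below (we only have $I\subseteq R^2$, not $R^m\subseteq I$), so $k\Gamma/I$ may be infinite-dimensional and the extension in (ii) should be phrased through Zorn's lemma rather than a terminating induction. The mild bookkeeping point to watch is that $p\mapsto\overline{p}$ need not be injective on $\mathscr{P}$, so when selecting $\mathscr{P}'$ one must pick a single representing path for each chosen basis vector; this causes no difficulty but is what guarantees that $\mathscr{Q}$ is indexed bijectively by $\mathscr{P}'$.
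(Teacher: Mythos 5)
Your proof is correct, but it runs in the opposite direction from the paper's. You work downstairs in the quotient: you first isolate exactly where the hypothesis $I\subseteq R^2$ is used, namely to show via the path-length grading that the residues of $V\cup E$ are linearly independent in $k\Gamma/I$ (a combination of vertices and arrows lying in $I$ would lie in $R^2\cap(k\Gamma_0\oplus k\Gamma_1)=0$), and then you apply Zorn's lemma inside $k\Gamma/I$ to extend this independent set to a basis drawn from the spanning set $\{\overline{p}\mid p\in\mathscr{P}\}$. The paper instead works upstairs in $k\Gamma$: it fixes a $k$-basis $X$ of $I$, applies Zorn's lemma to extend $X$ by paths of length $\geq 2$ to a maximal independent set $Z$ with $X\subseteq Z\subseteq X\cup\mathscr{P}_{\geq2}$, argues that maximality forces $Z$ to span $R^2$, and concludes that $V\cup E\cup Z$ is a basis of $k\Gamma$ containing a basis of $I$, so the residues of $V\cup E\cup(Z\setminus X)$ form a basis of the quotient (here admissibility enters through $X\subseteq R^2$, which keeps $V\cup E$ independent of $Z$). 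The two arguments buy slightly different things: yours is more economical, since it never needs a basis of $I$ or the intermediate claim that a basis of a complement of $I$ projects to a basis of $k\Gamma/I$, and it makes the role of admissibility completely explicit; the paper's version produces additional structure, namely a path-spanned complement of $I$ inside $k\Gamma$, which is precisely the picture behind the row-reduction recipe the paper gives immediately after the proposition for computing $\mathscr{P}'$ when $I$ is finite dimensional. One point in your write-up is actually more careful than the paper: you note that $p\mapsto\overline{p}$ need not be injective on $\mathscr{P}$ and fix a single representative per basis vector, a bookkeeping issue the paper's construction avoids automatically (its chosen paths sit inside a basis of $k\Gamma$) but never mentions.
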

\begin{proof}
Let $X$ be a $k$-linear basis of $I$. Denotes by $\mathscr{P}_{\geq2}$  the set of all paths of length $\geq2$. Define $$T:=\{Y\subseteq k\Gamma: Y \text{ is linearly independent in}\ k\Gamma \text{ satisfying}\   X\subseteq Y\subseteq X\cup\mathscr{P}_{\geq2}  \}.$$  $T$ becomes  a partial set due to the order of inclusion between subsets of $k\Gamma$. It is easy to see $T\neq\emptyset$ and $T$ satisfies the upper bound condition of chains. So by the famous Zorn's Lemma, $T$ has a maximal element, denoted by $Z$.

We claim that $Z$ is linearly equivalent to $\mathscr{P}_{\geq2}$. Otherwise, there exists $p\in\mathscr{P}_{\geq2}$ such that $p$ cannot be linearly expressed by $Z$, then $Z\cup\{p\}$ is linearly independent in $k\Gamma$, which contradicts to the maximal property of $Z$.

Since $Z$ is linearly equivalent to $\mathscr{P}_{\geq2}$,  it follows that $V\cup E\cup Z$ is linearly equivalent to $\mathscr{P}=V\cup E\cup\mathscr{P}_{\geq2}$. By the definition of $T$, $Z\subseteq X\cup\mathscr{P}_{\geq2}$. And $I$ is generated by $X$. Hence  $V\cup E\cup (Z\backslash X)$ forms a basis of the complement space of $I$ in $k\Gamma$. It means that $\mathscr Q =\{\bar x: x\in V\cup E\cup (Z\backslash X)\}$  forms a basis of $k\Gamma/I$. It is clear that $V\cup E\cup (Z\backslash X)\subseteq\mathscr{P}$ and it is the $\mathscr{P}^{'}$ we want.
\end{proof}
When $I\subseteq R^2$ is finite dimensional, we have an explicit way to determine the $\mathscr{P}^{'}$. Concretely, suppose $\{x_1,x_2\cdots x_m\}$ is a basis of $I$. Then there exists
a finite subset $\{p_1,p_2\cdots p_n\}$ of $\mathscr{P}$ such that
$x_i$ can be expressed by the linear combinations of $p_j$. Suppose
$x_i=\sum\limits_{j=1}^na_{ij}p_j$ for $i=1,2\cdots m$, then we
obtain a $m\times n$ matrix $A=(a_{ij})$. We can transform the
matrix $A$ into a row-ladder matrix $B=(b_{ij})$ through only row
transformations. Suppose $b_{i,c(i)}$ is the first nonzero number of
the $i$-th row of $B$. Since $B$ is a row-ladder matrix, we have $c_i\neq c_k$
for $i\neq k$. Then $\{x_1,x_2\cdots x_m\}\cup\{p_l|l\neq
c_1,c_2\cdots c_m\}$ is linearly equivalent to $\{p_1,p_2\cdots
p_n\}$. Hence $(\mathscr{P}\backslash\{p_1,p_2\cdots
p_n\})\cup\{p_l|l\neq c_1,c_2\cdots c_m\}$ is a basis of the
complement space of $I$ in $k\Gamma$. Then the residue classes
in $k\Gamma/I$ of all elements in this basis  form a basis of $k\Gamma/I$.

On the other hand, in some special cases, e.g., when $k\Gamma/I$ is a monomial algebra, even if $I$ is not finite dimensional, the choice of $\mathscr{P}'$ is also given in the same way. If $k\Gamma/I$ is a monomial algebra and $I$ is generated by a set of paths of length $\geq2$,  the set of paths that do not belong to $I$ is just the $\mathscr{P}'$ required.
\begin{definition}
Let $A$ be a $k$-algebra and $M$ an $A$-bimodule. A
\textbf{differential operator} (or say, \textbf{derivation}) from
$A$ into $M$ is a $k$-linear map $D: A\longrightarrow M$ such that
\begin{equation}\label{Leibnitz rule}
D(xy)=D(x)y+xD(y).
\end{equation}
In particular, when $M=A$, this coincides with the differential
operator of algebras.
\end{definition}
\begin{lemma}\label{lemma 2.2}
Suppose $D$ is a differential operator from $k\Gamma$ into
$k\Gamma/I$. Then $D$ is determined by its action on the set $V$ of
vertices of $\Gamma$ and the set $E$ of arrows of $\Gamma$.
\end{lemma}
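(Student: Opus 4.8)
The plan is to exploit the fact that $k\Gamma$ is generated as a $k$-algebra by $V\cup E$, together with the Leibniz rule \reff{Leibnitz rule}, to reduce the value of $D$ on an arbitrary element to its values on vertices and arrows. Since the set $\mathscr{P}$ of paths forms a $k$-basis of $k\Gamma$ and $D$ is $k$-linear, it suffices to show that $D(p)$ is determined by the data $\{D(e)\}_{e\in V}$ and $\{D(\alpha)\}_{\alpha\in E}$ for every path $p\in\mathscr{P}$. I would establish this by induction on the length $\ell(p)$ of $p$.

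For the base cases, a path of length $0$ is a vertex $e\in V$ and a path of length $1$ is an arrow $\alpha\in E$, so $D(p)$ is given directly by the hypothesised data. For the inductive step, suppose $\ell(p)=n\geq 2$ and factor $p=q\alpha$, where $\alpha\in E$ is an arrow and $q$ is a path of length $n-1$ with the composition $q\alpha$ defined. Recalling that $k\Gamma/I$ carries its natural $k\Gamma$-bimodule structure through the canonical projection $x\mapsto\overline{x}$, the Leibniz rule gives
\[
D(p)=D(q)\,\overline{\alpha}+\overline{q}\,D(\alpha).
\]
Here $D(\alpha)$ belongs to the given data, while $\overline{\alpha}$ and $\overline{q}$ are fixed elements of $k\Gamma/I$, and $D(q)$ is determined by the values on $V$ and $E$ by the induction hypothesis, since $\ell(q)=n-1<n$. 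Hence $D(p)$ is determined, completing the induction.

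Since every element of $k\Gamma$ is a finite $k$-linear combination of paths and $D$ is $k$-linear, the preceding step shows that the values $\{D(e)\}_{e\in V}\cup\{D(\alpha)\}_{\alpha\in E}$ determine $D$ on all of $k\Gamma$. The argument is in essence the universal property that a derivation is fixed by its action on a generating set. The only point requiring care is the bookkeeping of the bimodule action of $k\Gamma$ on $k\Gamma/I$ via the quotient map, so that the two summands on the right-hand side of the displayed identity are genuinely elements of $k\Gamma/I$ and so that the factorisation $p=q\alpha$ respects composability of arrows; I do not expect any genuine obstacle beyond setting up this induction cleanly.
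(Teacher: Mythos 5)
Your proof is correct: the induction on path length via the Leibniz rule is precisely the routine argument the paper leaves implicit, since it states this lemma without any proof at all. Unwinding your induction gives the closed formula $D(p)=\sum_{i=1}^{l}p_1\cdots p_{i-1}D(p_i)p_{i+1}\cdots p_l$ (with the bimodule action through the quotient map understood), which is exactly Eq.~(\ref{16}) in Lemma~\ref{2-4}, so your argument coincides with the paper's intended reasoning.
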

\begin{lemma}\label{2-4}
Let $\Gamma$ be a quiver. Denote $kV$ (resp. $kE$) by the linear
space spanned by the set $V$ of the vertices of $\Gamma$ (resp. the
set $E$ of the arrows of $\Gamma$). Assume we have a pair of linear
maps $D_0: kV\longrightarrow k\Gamma/I$ and $D_1: kE\longrightarrow
k\Gamma/I$ satisfying that
\begin{equation}\label{12}
D_0(x)x+xD_0(x)=D_0(x),\ x\in V,
\end{equation}
\begin{equation}\label{13}
D_0(x)y+xD_0(y)=0,\ x,y\in V,\ x\neq y,
\end{equation}
\begin{equation}\label{14}
D_0(x)q+xD_1(q)=D_1(q),\ x\in V,\ q\in E,\ t(q)=x,
\end{equation}
\begin{equation}\label{15}
D_1(q)y+qD_0(y)=D_1(q),\ y\in V,\ q\in E,\ h(q)=y.
\end{equation}
Then, the pair of linear maps $(D_0,D_1)$ can be uniquely extended
to a differential operator $D: k\Gamma\longrightarrow\ k\Gamma/I$
satisfying that
\begin{equation}\label{16}
D(p):=\sum\limits_{i=1}^lp_1\cdots p_{i-1}D_1(p_i)\cdots p_l.
\end{equation}
for any path $p=p_1p_2\cdots p_l, p_i\in E,1\leq i\leq l, l\geq2$.
\end{lemma}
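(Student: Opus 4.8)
The plan is to define $D$ directly on the path basis of $k\Gamma$ and then verify the Leibniz rule \reff{Leibnitz rule} on pairs of paths. Concretely, I would set $D(x)=D_0(x)$ for $x\in V$, $D(q)=D_1(q)$ for $q\in E$, and declare \reff{16} for every path of length $\geq2$, extending $k$-linearly. Uniqueness is then immediate: by Lemma \ref{lemma 2.2} any derivation extending $(D_0,D_1)$ is determined on $V$ and $E$, and iterating \reff{Leibnitz rule} along a path $p=p_1\cdots p_l$ forces exactly the telescoping expression \reff{16}. So the entire content of the lemma is that the map $D$ just defined is a derivation.

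To check \reff{Leibnitz rule}, by bilinearity it suffices to prove $D(pp')=D(p)p'+pD(p')$ for two paths $p,p'$, and I would split according to whether the product $pp'$ is zero. When $h(p)=t(p')$ and both paths have positive length, writing $p=p_1\cdots p_m$ and $p'=p'_1\cdots p'_n$, the product $pp'$ is the concatenation, a path of length $\geq2$; applying \reff{16} to $pp'$ and splitting the sum at index $m$ immediately yields $\big(\sum_i p_1\cdots D_1(p_i)\cdots p_m\big)p'+p\big(\sum_j p'_1\cdots D_1(p'_j)\cdots p'_n\big)=D(p)p'+pD(p')$. The sub-cases in which $p$ or $p'$ is a vertex reduce, after using $e_{t(q)}q=q$ and $qe_{h(q)}=q$, exactly to the hypotheses \reff{12}, \reff{14}, \reff{15}.

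The remaining case $pp'=0$ (that is, $h(p)\neq t(p')$) is where the work lies, because the required identity $D(p)p'+pD(p')=0$ is not among the hypotheses. I would first establish its three base instances on $V\cup E$: for distinct vertices this is \reff{13}; for a vertex $x$ and an arrow $q$ with $t(q)\neq x$ one needs $D_0(x)q+xD_1(q)=0$; and for arrows $q,q'$ with $h(q)\neq t(q')$ one needs $D_1(q)q'+qD_1(q')=0$. To prove these mismatched identities I would decompose every value $D_0(x),D_1(q)\in k\Gamma/I$ into its idempotent components ${}_uD_0(x)_v:=e_uD_0(x)e_v$ using $1=\sum_{z\in V}e_z$, and read off from \reff{12}--\reff{15} the precise support of these components: \reff{12} forces $D_0(x)$ to live only in ``row $x$'' and ``column $x$'' with vanishing $(x,x)$-part, \reff{14}--\reff{15} force $D_1(q)$ to live only in row $t(q)$ and column $h(q)$, and \reff{13} yields the cross-relation ${}_xD_0(x)_y=-{}_xD_0(y)_y$. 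Substituting these into the mismatched expressions makes the surviving components cancel in pairs, giving $0$.

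Finally I would lift the mismatched identity from arrows to arbitrary paths $p=p_1\cdots p_m$, $p'=p'_1\cdots p'_n$ with $h(p)\neq t(p')$. Using associativity of the $k\Gamma$-bimodule structure on $k\Gamma/I$, in $D(p)p'=\sum_i p_1\cdots p_{i-1}D_1(p_i)p_{i+1}\cdots p_m p'$ every term with $i<m$ carries the factor $p_{i+1}\cdots p_m p'=0$ and so vanishes, leaving only $p_1\cdots p_{m-1}D_1(p_m)p'$; symmetrically $pD(p')=pD_1(p'_1)p'_2\cdots p'_n$ is all that survives. The arrow base case $D_1(p_m)p'_1=-p_mD_1(p'_1)$ then collapses the sum of these two surviving terms to $0$. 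The main obstacle is clearly the idempotent-component bookkeeping of the third paragraph: everything else is formal once those mismatched base identities are in hand.
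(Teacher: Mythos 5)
Your proposal is correct, and it follows the same approach as the paper's own proof: the paper likewise reduces the lemma to checking the Leibniz rule \reff{Leibnitz rule} on pairs of basis paths (split into vertex/non-vertex cases), but then declares the whole verification routine and omits it. Your write-up in fact supplies the one genuinely non-routine ingredient hidden behind that omission — recognizing that in the zero-product case $h(p)\neq t(p')$ the needed identities (for instance $D_1(q)q'+qD_1(q')=0$ for arrows with $h(q)\neq t(q')$) are not among the hypotheses Eqs.(\ref{12})--(\ref{15}) and must be derived, which your idempotent-component bookkeeping together with Eq.(\ref{13}) accomplishes correctly, after which the lifting to longer paths is formal exactly as you describe.
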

\begin{proof}
One only need to prove that $D$ is indeed a differential operator.
For this, we need to check  Eq.(\ref{Leibnitz rule}) in the next
four cases:

(a) $x, y\in V$;~ (b) $x\in V, y\in \mathscr{P}\backslash V$; ~ (c)
$x\in\mathscr{P}\backslash V, y\in V$; ~ (d) $x,
y\in\mathscr{P}\backslash V$.

However, the checking process is routine, so we omit it here.
\end{proof}

In the sequel, we always suppose $k\Gamma/I$ is an admissible algebra for the given ideal $I$ and the notations in Definition \ref{admissible algebra} are used. From Definition \ref{admissible algebra} and Proposition \ref{proposition 2.2}, there exists a basis of
$k\Gamma/I$ which consists of residue classes of some paths including
that of $V$ and $E$. Denote the fixed basis of
$k\Gamma/I$ by $\mathscr{Q}$. Suppose $D:k\Gamma\longrightarrow
k\Gamma/I$ is a linear operator, then for any $p\in\mathscr{P}$,
$D(p)$ is a unique combination of the basis $\mathscr{Q}$ of
$k\Gamma/I$. Write this linear combination by
\begin{equation}
D(p)=\sum_{\overline{q}\in\mathscr{Q}}c_{\overline{q}}^p\overline{q}.
\end{equation}
where all $c_{\overline{q}}^p\in k$. We will use this notation
throughout this paper. As convention, for the empty set $\emptyset$,
we say $\sum_{\overline{q}\in\emptyset}c_{\overline{q}}^p\overline{q}=0$.
\begin{lemma}
Suppose $q_1, q_2\in\mathscr{P}, q_1, q_2\notin I$ and
$\overline{q_1}=\overline{q_2}$ in $k\Gamma/I$, then $t(q_1)=t(q_2),
h(q_1)=h(q_2)$, i.e., $q_1\parallel q_2$.
\end{lemma}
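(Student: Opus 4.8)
The plan is to exploit the idempotent structure coming from the vertices. Recall that in the path algebra $k\Gamma$ each vertex $v\in V$ is an idempotent, the elements of $V$ are pairwise orthogonal idempotents summing to the identity, and for any path $p$ one has the absorption relations
\begin{equation}
t(p)\,p=p=p\,h(p),\qquad v\,p=0\ (v\neq t(p)),\qquad p\,w=0\ (w\neq h(p)).
\end{equation}
Since $I\subseteq R^2$, no nonzero linear combination of vertices lies in $I$ (indeed $V\subseteq\mathscr{P}'$ by Proposition \ref{proposition 2.2}), so these relations descend to $k\Gamma/I$: the classes $\overline{v}$ remain pairwise orthogonal idempotents, and $\overline{t(p)}\,\overline{p}=\overline{p}=\overline{p}\,\overline{h(p)}$, with $\overline{v}\,\overline{p}=0$ whenever $v\neq t(p)$ and $\overline{p}\,\overline{w}=0$ whenever $w\neq h(p)$.

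First I would compare the tails. Write $a_i=t(q_i)$ and $b_i=h(q_i)$ for $i=1,2$. Multiplying the hypothesis $\overline{q_1}=\overline{q_2}$ on the left by the idempotent $\overline{a_1}$ and using the absorption relations in $k\Gamma/I$ gives
\begin{equation}
\overline{q_1}=\overline{a_1}\,\overline{q_1}=\overline{a_1}\,\overline{q_2}=\overline{a_1 q_2}.
\end{equation}
If $a_1\neq a_2$, then $a_1\neq t(q_2)$, so $a_1 q_2=0$ in $k\Gamma$, whence $\overline{q_1}=0$, that is $q_1\in I$, contradicting the hypothesis $q_1\notin I$. Therefore $a_1=a_2$, i.e. $t(q_1)=t(q_2)$.

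The heads are handled symmetrically: multiplying $\overline{q_1}=\overline{q_2}$ on the right by $\overline{b_1}$ yields $\overline{q_1}=\overline{q_2\,b_1}$, and if $b_1\neq b_2$ then $q_2 b_1=0$, again forcing $\overline{q_1}=0$ and contradicting $q_1\notin I$. Hence $h(q_1)=h(q_2)$, so $q_1\parallel q_2$. There is no genuine obstacle in the argument; the only point requiring care is fixing the composition convention consistently and confirming that the vertex idempotents survive in the quotient as nonzero orthogonal idempotents, which is guaranteed precisely by the admissibility condition $I\subseteq R^2$ together with Proposition \ref{proposition 2.2}.
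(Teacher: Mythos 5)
Your proof is correct and is essentially the same as the paper's: multiply $\overline{q_1}=\overline{q_2}$ by the tail vertex idempotent on the left (resp.\ head vertex on the right), use the absorption/orthogonality relations of paths with vertices to force $\overline{q_1}=\overline{0}$ when the endpoints disagree, and contradict $q_1\notin I$. The extra discussion of vertex classes surviving in the quotient is harmless but not needed, since the relations descend automatically under the quotient homomorphism and the contradiction only uses $\overline{q_1}=\overline{0}\Leftrightarrow q_1\in I$.
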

\begin{proof}
If $t(q_1)\neq t(q_2)$, then
$\overline{q_1}=\overline{t(q_1)}\overline{q_1}=\overline{t(q_1)}\overline{q_2}=\overline{0}$,
a contradiction, so $t(q_1)=t(q_2)$. Similarly, $h(q_1)=h(q_2)$.
\end{proof}
According to the Lemma above, for $\overline{p}\in\mathscr{Q}$, we can define $t(\overline{p}):=t(q)$(resp.$h(\overline{p}):=h(q)$) for any path $q\in\mathscr{P}$ satisfying $\overline{q}=\overline{p}$ in $k\Gamma/I$. For a path $s\in\mathscr{P}$ and $\overline{p}\in\mathscr{Q}$, if $t(s)=t(\overline{p})$ and $h(s)=h(\overline{p})$, we say $s$ and $\overline{p}$ to be parallel, denoted as $s\parallel\overline{p}$.

Denote $$\mathscr{Q}_A:=\{\overline{p}\in\mathscr{Q}|t(\overline{p})\neq
h(\overline{p})\}\  \text{ and}  \ \
\mathscr{Q}_C:=\{\overline{p}\in\mathscr{Q}|t(\overline{p})=
h(\overline{p})\}.$$ Moreover
$k\mathscr{Q}_A$(resp.,$k\mathscr{Q}_C$) denotes the subspace of
$k\Gamma/I$ generated by $\mathscr{Q}_A$(resp.,$\mathscr{Q}_C$).
Clearly,  as $k$-linear spaces,
$k\Gamma/I=k\mathscr{Q}_A\oplus k\mathscr{Q}_C$.
\begin{definition} Using the above notations,
an admissible algebra $k\Gamma/I$ is called  {\bf acyclic} if $$\mathscr{Q}_C\backslash\{\overline{v}|v\in V\}=\emptyset.$$
\end{definition}
It is easy to see from this definition that \\
(i)~ The fact whether the given $k\Gamma/I$ is acyclic is independent with  the
choice of $\mathscr{Q}$.\\
(ii)~ If the quiver $\Gamma$ is acyclic, then $k\Gamma/I$ is acyclic; the converse is not true in general.\\
(iii)~ If $k\Gamma/I$ is acyclic, then it is finite dimensional; the converse is not true, e.g., $k\Gamma/k^n\Gamma$ if $\Gamma$ is a loop for $n\geq2$.

\begin{proposition}\label{2-5}
Let $D: k\Gamma\longrightarrow k\Gamma/I$ be a $k$-linear operator.

(i)~ If $D$ is a differential operator, then

(a) for $v\in V$,
\begin{equation}\label{19}
D(v)=\sum_{\overline{q}\in\mathscr{Q}, t(\overline{q})=v,
h(\overline{q})\neq v}c_{\overline{q}}^{v}\overline{q}+
\sum_{\overline{q}\in\mathscr{Q}, h(\overline{q})=v,
t(\overline{q})\neq v}c_{\overline{q}}^{v}\overline{q},
\end{equation}

(b) for $p\in E$,
\begin{equation}\label{20}
D(p)=\sum_{\substack{\overline{q}\in\mathscr{Q},
\\h(\overline{q})=t(p),t(\overline{q})\neq
t(p)}}c_q^{t(p)}\overline{qp}+\sum_{\substack{\overline{q}\in\mathscr{Q},
\\\overline{q}\parallel p}}c_{\overline{q}}^{p}\overline{q}+\sum_{\substack{\overline{q}\in\mathscr{Q},
\\t(\overline{q})=h(p),h(\overline{q})\neq h(p)}}c_{\overline{q}}^{h(p)}\overline{pq}
\end{equation}
where the coefficients are subject to the following condition: for
any path $\overline{q}\in \mathscr{Q}$ such that
$t(\overline{q})\neq h(\overline{q})$,
\begin{equation}\label{21}
c_{\overline{q}}^{h(\overline{q})}+c_{\overline{q}}^{t(\overline{q})}=0.
\end{equation}

(ii)~ Conversely, assume the linear map $D$ from $kV\oplus kE$ to
$k\Gamma/I$ satisfies Eqs.(\ref{19}), (\ref{20}), (\ref{21}), then
$D$ can be uniquely extended linearly to a differential operator as
Eq.(\ref{16}).
\end{proposition}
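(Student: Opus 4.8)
The plan is to read off conditions \eqref{19}--\eqref{21} as the explicit, coefficient-level translation of the four Leibniz relations \eqref{12}--\eqref{15} of Lemma~\ref{2-4}, and then to invoke that lemma for the converse. Throughout I would use repeatedly that, for a vertex $v$, left multiplication by $v$ kills every basis element $\overline{q}$ with $t(\overline{q})\neq v$ and fixes those with $t(\overline{q})=v$, while right multiplication by $v$ kills every $\overline{q}$ with $h(\overline{q})\neq v$ and fixes those with $h(\overline{q})=v$. This vertex-multiplication rule is what makes all the bookkeeping mechanical against the fixed basis $\mathscr{Q}$.

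For part (i) I would treat the three relations in turn. Since each $v\in V$ is idempotent, the Leibniz rule gives $D(v)=D(v)v+vD(v)$; expanding $D(v)=\sum_{\overline{q}}c^v_{\overline{q}}\overline{q}$ and comparing coefficients, the terms with $t(\overline{q})=h(\overline{q})=v$ (including $\overline{v}$ itself) and those with $t(\overline{q})\neq v$, $h(\overline{q})\neq v$ are forced to vanish, which is exactly \eqref{19}. For an arrow $p$ I would exploit the two factorizations $p=t(p)\,p$ and $p=p\,h(p)$. Applying Leibniz to the first yields $D(t(p))p+t(p)D(p)=D(p)$; projecting onto the part of $D(p)$ supported on basis elements with tail $\neq t(p)$ identifies it with $D(t(p))p=\sum_{h(\overline{q})=t(p),\,t(\overline{q})\neq t(p)}c^{t(p)}_{\overline{q}}\overline{qp}$, the first sum of \eqref{20}, and simultaneously shows these terms all have head $h(p)$. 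Symmetrically, $p=p\,h(p)$ gives the third sum and shows that the part of $D(p)$ with head $\neq h(p)$ has tail $t(p)$. Combining the two, no basis element of $D(p)$ can have both tail $\neq t(p)$ and head $\neq h(p)$, so the remaining terms are precisely the parallel ones $\overline{q}\parallel p$, producing the middle sum. Finally, for distinct $x,y\in V$ the relation $0=D(xy)=D(x)y+xD(y)$ collapses, after the same projections, to $\sum_{t(\overline{r})=x,\,h(\overline{r})=y}(c^x_{\overline{r}}+c^y_{\overline{r}})\overline{r}=0$, and reading off coefficients with $x=t(\overline{r})$, $y=h(\overline{r})$ gives \eqref{21}.

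For part (ii) I would run these computations in reverse to verify the hypotheses of Lemma~\ref{2-4}. Assuming \eqref{19}, a direct check shows $vD(v)+D(v)v=D(v)$, i.e.\ \eqref{12}; assuming \eqref{19} together with \eqref{21}, the computation of $D(x)y+xD(y)$ reduces to $\sum_{t(\overline{r})=x,\,h(\overline{r})=y}(c^x_{\overline{r}}+c^y_{\overline{r}})\overline{r}$, which vanishes by \eqref{21}, giving \eqref{13}; and assuming \eqref{20}, writing $D(p)=S_1+S_2+S_3$ for its three sums (tail $\neq t(p)$, parallel, head $\neq h(p)$) gives $t(p)D(p)=S_2+S_3$ and $D(t(p))p=S_1$, hence \eqref{14}, and dually $D(p)h(p)=S_1+S_2$ and $pD(h(p))=S_3$, hence \eqref{15}. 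With \eqref{12}--\eqref{15} established, Lemma~\ref{2-4} extends $(D_0,D_1)$ uniquely to a differential operator via \eqref{16}.

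I expect the only genuine subtlety to lie in the arrow case of part (i): no single one-sided relation determines the support of $D(p)$, and one must combine the consequences of $p=t(p)\,p$ and $p=p\,h(p)$ to exclude basis elements with both tail $\neq t(p)$ and head $\neq h(p)$. Everything else is a disciplined matching of coefficients against $\mathscr{Q}$ using only the vertex-multiplication rule above; I would also take care to record that \eqref{19} and \eqref{20} are equalities in $k\Gamma/I$, so that the possibly non-reduced products $\overline{qp}$ and $\overline{pq}$ need not be re-expanded in the basis $\mathscr{Q}$.
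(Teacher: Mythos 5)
Your proof is correct and takes essentially the same approach as the paper: part (i) extracts Eqs.(\ref{19})--(\ref{21}) from the Leibniz rule applied to $v=vv$, to factorizations of an arrow $p$ through its endpoints, and to $xy=0$ for distinct vertices $x\neq y$, while part (ii) reduces to verifying the hypotheses of Lemma \ref{2-4}. The only organizational difference is that the paper obtains all three sums of Eq.(\ref{20}) at once from the single three-factor expansion $D(t(p)\,p\,h(p))=D(t(p))p+t(p)D(p)h(p)+p\,D(h(p))$, so the parallel part appears immediately as $t(p)D(p)h(p)$ and your combination step (excluding basis elements with both tail $\neq t(p)$ and head $\neq h(p)$) is not needed, whereas your explicit checks in part (ii) supply exactly the ``straightforward'' verification the paper leaves to the reader.
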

\begin{proof}
(i) For a given $v\in V$, since $vv=v$, we have
$$D(v)=D(vv)=D(v)v+v D(v).$$
So by the direct computation, we can get
\begin{eqnarray}
D(v)&=&\sum_{\overline{q}\in \mathscr{Q},
h(\overline{q})=v}c_{\overline{q}}^{v}\overline{q}+\sum_{\overline{q}\in
\mathscr{Q}, t(\overline{q})=v}c_{\overline{q}}^{v} \overline{q}\notag.\\
\end{eqnarray}
Moreover,
$$D(v)=D(v)v+v D(v)=(D(v)v+v D(v))v+v D(v)=D(v)v+v D(v)v+v D(v),$$
so we have $v D(v)v=0$. That means  $\sum\limits_{\overline{q}\in
\mathscr{Q},
t(\overline{q})=h(\overline{q})=v}c_{\overline{q}}^{v}\overline{q}=0$.
So we get Eq.(\ref{19}).

Also, for a given $p\in E$, we have
\begin{eqnarray*}
D(p)&=&D(t(p)ph(p))\\
&=&D(t(p)){ph(p)}+t(p)D(p)h(p)+t(p)p D(h(p))\\
&=&D(t(p))p+\sum_{\substack{\overline{q}\in\mathscr{Q}\\\overline{q}\parallel p}}c_{\overline{q}}^p\overline{q}+p D(h(p))\\
\end{eqnarray*}
Since $t(p),h(p)\in V$, by Eq.(\ref{19}), we can easily get
Eq.(\ref{20}).

Let $x,y\in V,\ x\neq y$. By (\ref{19}),
\begin{eqnarray*}
D(xy)&=&D(x)y+xD(y)\\
&=&\sum\limits_{\overline{q}\in\mathscr{Q},t(\overline{q})=x,h(\overline{q})=y}c_{\overline{q}}^{x}\overline{q}+\sum\limits_{\overline{q}\in\mathscr{Q}, t(\overline{q})=x,h(\overline{q})=y}c_{\overline{q}}^{y}\overline{q}\\
&=&\sum\limits_{\overline{q}\in\mathscr{Q},
t(\overline{q})=x,h(\overline{q})=y}(c_{\overline{q}}^{x}+c_{\overline{q}}^{y})\overline{q}
\end{eqnarray*}
But, $D(xy)=0$ since $xy=0$. So,
$\sum\limits_{\overline{q}\in\mathscr{Q},
t(\overline{q})=x,h(\overline{q})=y}(c_{\overline{q}}^{x}+c_{\overline{q}}^{y})\overline{q}=0$.

For a path $\overline{q}\in\mathscr{Q}$ such that $t(\overline{q})\neq h(\overline{q})$, substituting $x$ and $y$
respectively with $t(\overline{q})$ and $h(\overline{q})$, we get
Eq.(\ref{21}).
\\

(ii)~ We only need to verify the conditions of Lemma \ref{2-4} are
satisfied. Because the process is straightforward, we leave it to
the readers.
\end{proof}

Next, we apply Proposition \ref{2-5} to display a standard
basis of differential operators from $k\Gamma$ to $k\Gamma/I$, for any admissible algebra
$k\Gamma/I$.
\begin{proposition} (\textbf{Differential operator} $D_{r,\overline{s}}$.)
For a quiver $\Gamma=(V, E)$, let $r\in E$ and $s\in \mathscr{P}$
with $r\parallel s$. Define the $k$-linear operator
$D_{r,\overline{s}}:{k}V\oplus{k}E\longrightarrow k\Gamma/I$
satisfying
\begin{equation}\label{29}
D_{r,\overline{s}}(p)=\begin{cases}
\overline{s}, &p=r~~\text{for}~~ p\in E,\\
0, &p\neq r~~\text{for}~~ p\in E\cup V, \end{cases}
\end{equation}
Then, the conditions of Lemma(\ref{2-4}) are satisfied and thus,
$D_{r,\overline{s}}$ can be uniquely extended to a differential
operator from ${k}\Gamma$ to $k\Gamma/I$, denoted still by
$D_{r,\overline{s}}$ for convenience.
\end{proposition}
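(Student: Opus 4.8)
The plan is to verify directly that $D_{r,\overline{s}}$, regarded as a pair of linear maps $D_0 := D_{r,\overline{s}}|_{kV}$ and $D_1 := D_{r,\overline{s}}|_{kE}$, satisfies the four compatibility conditions (\ref{12})--(\ref{15}) of Lemma \ref{2-4}; once this is done, Lemma \ref{2-4} supplies the unique extension to a differential operator via formula (\ref{16}), with no further work required. The crucial structural input is the parallelism hypothesis $r\parallel s$, i.e. $t(r)=t(s)$ and $h(r)=h(s)$, together with the elementary path-algebra identities $t(p)\,p=p=p\,h(p)$ for any path $p$, which pass to $k\Gamma/I$ as $\overline{t(s)}\,\overline{s}=\overline{s}=\overline{s}\,\overline{h(s)}$.

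I would first dispose of the conditions involving only vertices. Since by definition $D_0(v)=0$ for every $v\in V$, both (\ref{12}) and (\ref{13}) collapse to the trivially true identity $0=0$. For (\ref{14}) and (\ref{15}), the term $D_0$ again vanishes, so they reduce respectively to $x\,D_1(q)=D_1(q)$ (for $t(q)=x$) and $D_1(q)\,y=D_1(q)$ (for $h(q)=y$). When $q\neq r$ we have $D_1(q)=0$, so both sides are zero and the identities hold automatically; thus only the single arrow $q=r$ requires genuine checking.

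The one substantive case is $q=r$, where $D_1(r)=\overline{s}$. For (\ref{14}) the relevant vertex is $x=t(r)$, and using $r\parallel s$ we compute
\begin{equation}
t(r)\,\overline{s}=\overline{t(r)\,s}=\overline{t(s)\,s}=\overline{s}=D_1(r),\notag
\end{equation}
so (\ref{14}) holds. Symmetrically, for (\ref{15}) the relevant vertex is $y=h(r)$, and
\begin{equation}
\overline{s}\,h(r)=\overline{s\,h(r)}=\overline{s\,h(s)}=\overline{s}=D_1(r),\notag
\end{equation}
so (\ref{15}) holds as well. This exhausts all four conditions, and the extension follows from Lemma \ref{2-4}.

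There is no real obstacle here: the verification is entirely routine, and the only point that must not be overlooked is precisely the role of $r\parallel s$, which is exactly what makes the idempotent multiplications $t(r)\,\overline{s}$ and $\overline{s}\,h(r)$ collapse back to $\overline{s}$ rather than annihilate it. Were $s$ not parallel to $r$, the left-hand sides above would vanish and (\ref{14})--(\ref{15}) would fail, so the hypothesis is used in an essential way.
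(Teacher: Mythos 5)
Your proof is correct and follows exactly the paper's approach: the paper's own proof simply asserts that Eqs.~(\ref{12})--(\ref{15}) ``can be checked easily by the definition of $D_{r,\overline{s}}$,'' and you have carried out precisely this verification, correctly isolating the only nontrivial case $q=r$ where the hypothesis $r\parallel s$ is needed to make $t(r)\,\overline{s}=\overline{s}=\overline{s}\,h(r)$.
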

\begin{proof}
Eqs.(\ref{12}), (\ref{13}), Eq.(\ref{14}) and Eq.(\ref{15}) can be
checked easily by the definition of $D_{r,\overline{s}}$.
\end{proof}

 For a given $s\in \mathscr{P}$,
we have the corresponding inner differential operator:
\begin{equation}\label{2.13}
D_{\overline{s}}: {k}\Gamma\rightarrow k\Gamma/I,\
D_{\overline{s}}(q)=\overline{sq}-\overline{qs},\ \forall
q\in\mathscr{P}.
\end{equation}
\begin{theorem}\label{theorem 3.2}
Let $\Gamma=(V,E)$ be a quiver and $I$ be an ideal such that $k\Gamma/I$ is an admissible algebra. Then  the set
\begin{equation}
\mathfrak{B}\:=\mathfrak{B}_1\cup\mathfrak{B}_{2}
\end{equation}
is a basis of the $k$-linear space of differential operators
from $k\Gamma$ to $k\Gamma/I$,
where \begin{equation}\label{eq2.15}
\mathfrak{B}_1:=\{D_{\overline{s}}|\overline{s}\in\mathscr{Q}_A\},\ \ \
\mathfrak{B}_{2}:=\{D_{r,\overline{s}}|r\in
E,\overline{s}\in\mathscr{Q}, r\parallel\overline{s} \}.
\end{equation}
\end{theorem}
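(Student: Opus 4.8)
The plan is to reduce everything to the generators $V$ and $E$ via Lemma~\ref{lemma 2.2} and the explicit coefficient description of Proposition~\ref{2-5}, and then prove spanning and linear independence separately. First I would record how the two families act on generators. From (\ref{29}), each $D_{r,\overline{s}}\in\mathfrak{B}_2$ satisfies $D_{r,\overline{s}}(v)=0$ for all $v\in V$ and $D_{r,\overline{s}}(p)=\delta_{p,r}\,\overline{s}$ for $p\in E$, so it annihilates vertices and is ``diagonal'' on arrows. A short computation from (\ref{2.13}), using $t(s)\neq h(s)$ for $\overline{s}\in\mathscr{Q}_A$, gives $D_{\overline{s}}(h(s))=\overline{s}$, $D_{\overline{s}}(t(s))=-\overline{s}$, $D_{\overline{s}}(v)=0$ otherwise, and $D_{\overline{s}}(p)=\overline{sp}-\overline{ps}$ on arrows. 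So the $\mathfrak{B}_1$-operators carry the nontrivial vertex action while the $\mathfrak{B}_2$-operators carry the diagonal arrow action.

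For spanning, I would take an arbitrary differential operator $D$ with coefficients $c_{\overline{q}}^{\,v},c_{\overline{q}}^{\,p}$ as in (\ref{19})--(\ref{20}) and match the vertex values first. Setting $a_{\overline{s}}:=c_{\overline{s}}^{\,h(\overline{s})}$ for each $\overline{s}\in\mathscr{Q}_A$, the constraint (\ref{21}) forces $c_{\overline{s}}^{\,t(\overline{s})}=-a_{\overline{s}}$, so that $D':=D-\sum_{\overline{s}\in\mathscr{Q}_A}a_{\overline{s}}D_{\overline{s}}$ satisfies $D'(v)=0$ for every $v\in V$. The crucial point is then to feed $D'(t(p))=D'(h(p))=0$ back into (\ref{20}): since the first and third sums there are indexed by the vertex coefficients $c^{\,t(p)}$ and $c^{\,h(p)}$, these off-diagonal terms all vanish, leaving $D'(p)=\sum_{\overline{q}\parallel p}c_{\overline{q}}^{\,p}\,\overline{q}$. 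Thus $D'$ agrees on $V$ and $E$ with $\sum_{p\in E}\sum_{\overline{q}\parallel p}c_{\overline{q}}^{\,p}D_{p,\overline{q}}\in\operatorname{span}\mathfrak{B}_2$, and by Lemma~\ref{lemma 2.2} the two operators coincide, exhibiting $D$ as a combination of $\mathfrak{B}_1\cup\mathfrak{B}_2$.

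For linear independence, suppose $\sum_{\overline{s}\in\mathscr{Q}_A}a_{\overline{s}}D_{\overline{s}}+\sum_{r,\,\overline{q}\parallel r}b_{r,\overline{q}}D_{r,\overline{q}}=0$. Evaluating at a vertex kills the $\mathfrak{B}_2$-part, and since each $D_{\overline{s}}(v)$ is a scalar multiple of the single basis vector $\overline{s}$, the coefficient of a fixed $\overline{s}_0$ in the value at $v=h(s_0)$ is exactly $a_{\overline{s}_0}$; linear independence of $\mathscr{Q}$ in $k\Gamma/I$ then gives $a_{\overline{s}_0}=0$ for all $\overline{s}_0\in\mathscr{Q}_A$. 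With the $\mathfrak{B}_1$-part eliminated, evaluating the remainder at each arrow $p$ yields $\sum_{\overline{q}\parallel p}b_{p,\overline{q}}\,\overline{q}=0$, and independence of the distinct basis elements $\overline{q}\in\mathscr{Q}$ forces $b_{p,\overline{q}}=0$. Hence $\mathfrak{B}$ is independent, and with spanning it is a basis.

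I expect the main obstacle to be bookkeeping rather than conceptual: computing $D_{\overline{s}}$ correctly on generators from (\ref{2.13}) and, above all, verifying that once $D'$ annihilates the vertices the off-diagonal sums of (\ref{20}) genuinely disappear, since this is precisely what confines the residual operator to $\operatorname{span}\mathfrak{B}_2$. A related subtlety to address is why $\mathfrak{B}_1$ is indexed by $\mathscr{Q}_A$ and not all of $\mathscr{Q}$: for $\overline{s}\in\mathscr{Q}_C$ one has $t(s)=h(s)$, whence $D_{\overline{s}}(v)=0$ for every $v$, so such inner operators already lie in $\operatorname{span}\mathfrak{B}_2$ and must be excluded to keep the family independent.
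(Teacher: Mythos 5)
Your proposal is correct and follows essentially the same route as the paper: both arguments rest on Lemma~\ref{lemma 2.2} and the coefficient description of Proposition~\ref{2-5}, prove independence by evaluating first at vertices (killing the $\mathfrak{B}_2$-part) and then at arrows, and prove spanning by matching an arbitrary $D$ against the combination $-\sum_{\overline{s}\in\mathscr{Q}_A}c_{\overline{s}}^{t(\overline{s})}D_{\overline{s}}+\sum c_{\overline{s}}^{r}D_{r,\overline{s}}$, which by Eq.~(\ref{21}) is exactly your $D-D'$. Your write-up merely reorganizes this as ``subtract the $\mathfrak{B}_1$-part, then show the remainder lies in $\operatorname{span}\mathfrak{B}_2$,'' and in doing so supplies the generator-by-generator verification that the paper declares straightforward and omits.
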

\begin{proof} We only need to verify that the operators in
$\mathfrak{B}$ are linearly independent and any differential
operators can be generated $k$-linearly by $\mathfrak{B}$.

 \textbf{Step 1. $\mathfrak{B}$ is linearly
independent.} Suppose there are
$c_{\overline{p}},c_{r,\overline{s}}\in k$ such that
\begin{equation}\label{independent}
\sum_{\overline{p}\in\mathscr{Q}, h(\overline{p})\neq
t(\overline{p})}c_{\overline{p}}D_{\overline{p}}+\sum_{r\in
E,\overline{s}\in \mathscr{Q},r\parallel
\overline{s}}c_{r,\overline{s}}D_{r,\overline{s}}=0.
\end{equation}

Then for any given $\overline{p_0}\in \mathscr{Q},
h(\overline{p_0})\neq t(\overline{p_0})$, by the definition of
$D_{\overline{p}}$ and $D_{r,\overline{s}}$, we have

\begin{eqnarray*}
0&=&\sum_{\overline{p}\in\mathscr{Q}, t(\overline{p})\neq
h(\overline{p})}c_{\overline{p}}D_{\overline{p}}(h(\overline{p_0}))+\sum_{r\in
E,\overline{s}\in
\mathscr{Q}, r\parallel \overline{s}}c_{r,\overline{s}}D_{r,\overline{s}}(h(\overline{p_0}))\\
&=&\sum_{\overline{p}\in\mathscr{Q}, t(\overline{p})\neq h(\overline{p})}c_{\overline{p}}(\overline{ph(p_0})-\overline{h(p_0)p})+0\\
&=&\sum_{\overline{p}\in\mathscr{Q},t(\overline{p})\neq
h(\overline{p})=h(\overline{p_0})}c_{\overline{p}}\overline{p}-\sum_{\overline{q}\in\mathscr{Q},h(\overline{q})\neq
t(\overline{q})=h(\overline{p_0})}c_{\overline{q}}\overline{q}.
\end{eqnarray*}
In the last formula above, $\overline{p}\ne \overline{q}$ always
holds. Thus, their coefficients are all zero. In particular,
$c_{\overline{p_0}}=0$ for any $\overline{p_0}\in\mathscr{Q}$ with
$h(\overline{p_0})\neq t(\overline{p_0})$.

Thus, from (\ref{independent}), we get that
$$\sum_{r\in E,\overline{s}\in
\mathscr{Q}, r\parallel
\overline{s}}c_{r,\overline{s}}D_{r,\overline{s}}=0.$$
Further, for any given $r_0\in E, \overline{s}\in \mathscr{Q}$ with
$\overline{s}\parallel r_0$, we have:

$\sum\limits_{r\in E,\overline{s}\in \mathscr{Q},r\parallel
\overline{s}}c_{r,\overline{s}}D_{r,\overline{s}}(r_0)=0~~~~\Longrightarrow~~~~\sum\limits_{\overline{s}\in
\mathscr{Q},r_0\parallel \overline{s}}c_{r_0,\overline{s}}\overline{s}=0$.\\
It follows that $c_{r_0,\overline{s}}=0$ for any $r_0\in E,
\overline{s}\in \mathscr{Q}$ with $r\parallel \overline{s}$.

Hence,  $\mathfrak{B}$ is $k$-linearly independent.

\textbf{Step 2. $\mathfrak{B}$ is the set of $\bf k$-linear
generators.} Let $D: {k}\Gamma\rightarrow k\Gamma/I$ be any
differential operator. Then for $v\in V$ and $p\in E$, by
Eqs.(\ref{19}), (\ref{20}) and (\ref{21}) we have
\begin{equation}\label{033}
D(v)=\sum_{\overline{q}\in\mathscr{Q}, h(\overline{q})\neq
t(\overline{q})=v}c_{\overline{q}}^{v}\overline{q}+
\sum_{\overline{q}\in\mathscr{Q},t(\overline{q})\neq
h(\overline{q})=v}c_{\overline{q}}^{v}\overline{q}.
\end{equation}
\begin{equation}\label{33}
D(p)=-\sum_{\substack{\overline{q}\in\mathscr{Q}\\t(\overline{q})\neq
h(\overline{q})=t(p)}}c_{\overline{q}}^{t(\overline{q})}\overline{qp}+\sum_{\substack{\overline{q}\in\mathscr{Q}\\\overline{q}\parallel
p}}c_{\overline{q}}^p\overline{q}+
\sum_{\substack{\overline{q}\in\mathscr{Q}\\h(\overline{q})\neq
t(\overline{q})=h(p)}}c_{\overline{q}}^{t(\overline{q})}\overline{pq}.
\end{equation}
We claim that $D$ agrees with the differential operator
$\overline{D}$ defined by the linear combination
\begin{equation}\label{III.23}
\overline{D}=-\sum_{\overline{s}\in\mathscr{Q}, t(\overline{s})\neq
h(\overline{s})}c_{\overline{s}}^{t(\overline{s})}D_{\overline{s}}+\sum_{r\in
E,\overline{s}\in\mathscr{Q}, \overline{s}\parallel
r}c_{\overline{s}}^rD_{r,\overline{s}},
\end{equation}
where $c_{\overline{s}}^{t(\overline{s})}$ and $c_{\overline{s}}^r$
come from Eqs.(\ref{033}) and (\ref{33}). Any path in $\mathscr{P}$
is either a vertex or a product of arrows. Thus by the product rule
of differential operators, to show
 $D=\overline{D}$, we only need to
verify that $D(q)=\overline{D}(q)$ for each $q=v\in V$ and $q=p\in
E$. The verification is straightforward, so we omit it.
\end{proof}
We call the set $\mathfrak{B}$ in Theorem \ref{theorem 3.2} the \textbf{standard basis} of the $k$-linear space $\emph{Diff}(k\Gamma, k\Gamma/I)$ generated by all differential operators from $k\Gamma$ to $k\Gamma/I$.

From this theorem, we get $\textsl{Diff}(k\Gamma,k\Gamma/I)=\mathfrak{D}_1\oplus
\mathfrak{D}_{2}$, where $\mathfrak{D}_i$ is the $k$-linear space
generated by $\mathfrak{B}_i$ for $i=1,2$ in (\ref{eq2.15}).

For any $p\in E$,
$D_{p,\overline{p}}\in\mathfrak{B}_{2}$ is called  \textbf{arrow differential operator} from
$k\Gamma$ to $k\Gamma/I$. Let $\mathfrak{B}_E:=\{D_{p,\overline{p}}|p\in E\}$ and $\mathfrak{D}_E:=k\mathfrak{B}_E$ is called the \textbf{space of arrow  differential operators}.

\begin{remark}
When $r\in E$ is a loop of $\Gamma$, i.e., $t(r)=h(r)$, then
$D_{r,\overline{t(r)}}\in\mathfrak{B}_{2}$.
\end{remark}

\section{$H^1(k\Gamma,k\Gamma/I)$ for an admissible algebra $k\Gamma/I$}
\begin{proposition}\label{proposition 4.1}
Let $q\in\mathscr{P}$ be such that $h(q)=t(q)=v_0$. We have
\begin{equation}\label{4.21}
D_{\overline{q}}=\sum\limits_{p\in E,
t(p)=v_0}D_{p,\overline{qp}}-\sum\limits_{r\in E,
h(r)=v_0}D_{r,\overline{rq}}.
\end{equation}
\end{proposition}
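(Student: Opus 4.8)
The plan is to invoke Lemma \ref{lemma 2.2}: both sides of \reff{4.21} are differential operators from $k\Gamma$ to $k\Gamma/I$ --- the left-hand side by \reff{2.13}, and the right-hand side as a finite $k$-linear combination of the operators $D_{r,\overline{s}}\in\mathfrak{D}_2$ (finite because $E$ is finite) --- so it suffices to check that the two sides agree on every vertex $v\in V$ and every arrow of $\Gamma$. I would dispose of the vertices first: by \reff{29} each $D_{r,\overline{s}}$ annihilates $V$, so the entire right-hand side vanishes on vertices, while on the left $D_{\overline{q}}(v)=\overline{qv}-\overline{vq}$, and since $t(q)=h(q)=v_0$ both $\overline{qv}$ and $\overline{vq}$ equal $\overline{q}$ when $v=v_0$ and vanish otherwise; hence $D_{\overline{q}}(v)=0$ for every $v$, and the two sides agree on $V$.

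The substance of the argument is the evaluation on a test arrow $e\in E$. On the left, $D_{\overline{q}}(e)=\overline{qe}-\overline{eq}$; recalling that a product of paths is nonzero only when the head of the first factor meets the tail of the second, the term $\overline{qe}$ survives precisely when $t(e)=h(q)=v_0$, and $\overline{eq}$ survives precisely when $h(e)=t(q)=v_0$. On the right, in the sum $\sum_{p\in E,\,t(p)=v_0}D_{p,\overline{qp}}(e)$ only the summand with $p=e$ can be nonzero, and it is present exactly when $e$ itself belongs to the index set, i.e. when $t(e)=v_0$, in which case it contributes $\overline{qe}$; symmetrically $\sum_{r\in E,\,h(r)=v_0}D_{r,\overline{rq}}(e)$ contributes $\overline{eq}$ exactly when $h(e)=v_0$. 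Matching term by term shows the two sides agree on $e$, the loop case $t(e)=h(e)=v_0$ being covered automatically since then both indicator conditions hold simultaneously.

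Before running these computations I would record the well-definedness of the right-hand side: for $p\in E$ with $t(p)=v_0$ the concatenation $qp$ is a genuine path because $h(q)=v_0=t(p)$, and $p\parallel qp$ since $t(qp)=t(q)=v_0=t(p)$ and $h(qp)=h(p)$; likewise $r\parallel rq$ for $r\in E$ with $h(r)=v_0$. Thus every $D_{p,\overline{qp}}$ and $D_{r,\overline{rq}}$ is a legitimate operator of the type in \reff{29}, and since $D_{r,\overline{s}}$ depends $k$-linearly on its second argument $\overline{s}$, the identity is unaffected if some $\overline{qp}$ or $\overline{rq}$ happens to vanish in $k\Gamma/I$. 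I anticipate no genuine obstacle here: the only point requiring care is the bookkeeping of the composition convention together with the conditions $t(e)=v_0$ versus $h(e)=v_0$, so that the two summation ranges on the right are correctly matched against the two surviving terms on the left.
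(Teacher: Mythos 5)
Your proposal is correct and follows exactly the paper's strategy: both sides are differential operators (the right-hand side being a finite $k$-linear combination of derivations), so by Lemma \ref{lemma 2.2} it suffices to check agreement on $V$ and $E$, which is precisely the verification the paper declares ``direct'' and omits. Your explicit computations on vertices and arrows, including the well-definedness of $D_{p,\overline{qp}}$ and $D_{r,\overline{rq}}$ and the loop case, correctly supply those omitted details.
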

\begin{proof}
Note that the both sides of (\ref{4.21}) are $k$-linearly generated
by differential operators. So, by the product formula of
differential operators, we only need to verify that the both sides
always agree when they act on the elements of $V$ and $E$. Since the
computation is direct, we omit it here.
\end{proof}
\begin{remark}
For $v\in V$, it is clear that $t(v)=h(v)=v$. From Proposition
\ref{proposition 4.1}, we have
\begin{equation}
D_{\overline{v}}=\sum\limits_{p\in E,
t(p)=v}D_{p,\overline{p}}-\sum\limits_{r\in E,
h(r)=v}D_{r,\overline{r}}.
\end{equation}
We call $D_{\overline{v}}$ the \textbf{vertex differential operator}
from $k\Gamma$ to $k\Gamma/I$. Let $\mathfrak{D}_{V}$ denote the linear space spanned by
$\{D_{\overline{v}}|v\in V\}$, called the \textbf{space of vertex differential operators}. It is clear that
$\mathfrak{D}_{V}$ is a subspace of $\mathfrak{D}_{E}$.
\end{remark}
\begin{lemma}\label{lemma 4.2}
Let $p\in \mathscr{P}$, then $\overline{p}$ is always in the $k$-subspace $k\{\overline{q}\in
\mathscr{Q}|\ \overline{q}\parallel p\}$ generated by $\overline{q}$ with $\overline{q}\parallel p$.
\end{lemma}
\begin{proof}
Suppose $\overline{p}=\sum\limits_{\overline{q}\in
\mathscr{Q}}c_{\overline{q}}\overline{q}$, then
$\overline{t(p)ph(p)}=\sum\limits_{\overline{q}\in
\mathscr{Q}}c_{\overline{q}}\overline{t(p)qh(p)}=\sum\limits_{\substack{\overline{q}\in
\mathscr{Q}\\\overline{q}\parallel p}}c_{\overline{q}}\overline{q}.$
\end{proof}
\begin{corollary}\label{corollary 4.3}
Let $q\in\mathscr{P}$ be such that $h(q)=t(q)$. Then
$D_{\overline{q}}\in k\mathfrak{B}_{2}=\mathfrak{D}_{2}$.
\end{corollary}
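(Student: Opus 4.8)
The plan is to combine Proposition \ref{proposition 4.1} with Lemma \ref{lemma 4.2}. Writing $v_0 := t(q) = h(q)$, Proposition \ref{proposition 4.1} already expresses the inner operator as
$$D_{\overline{q}}=\sum\limits_{p\in E,\, t(p)=v_0}D_{p,\overline{qp}}-\sum\limits_{r\in E,\, h(r)=v_0}D_{r,\overline{rq}},$$
so it suffices to show that each summand $D_{p,\overline{qp}}$ and $D_{r,\overline{rq}}$ lies in $\mathfrak{D}_{2}=k\mathfrak{B}_{2}$. The subtlety is that although these operators have the shape $D_{r,\overline{s}}$, their second index $\overline{qp}$ (resp. $\overline{rq}$) is the residue class of a \emph{path} and need not be one of the distinguished basis elements of $\mathscr{Q}$; hence the summands are not \emph{a priori} members of $\mathfrak{B}_{2}$.

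First I would record the linearity of the construction $D_{r,\overline{s}}$ in its second argument: from the defining formula (\ref{29}), the assignment $\overline{s}\mapsto D_{r,\overline{s}}$ is $k$-linear on $kV\oplus kE$, and since the extension to a differential operator is unique (Lemma \ref{2-4} together with Lemma \ref{lemma 2.2}), it is $k$-linear into $\textsl{Diff}(k\Gamma,k\Gamma/I)$. Next, applying Lemma \ref{lemma 4.2} to the path $qp$, I would expand $\overline{qp}=\sum_{\overline{q'}\in\mathscr{Q},\, \overline{q'}\parallel qp}c_{\overline{q'}}\overline{q'}$, so that by linearity $D_{p,\overline{qp}}=\sum_{\overline{q'}\parallel qp}c_{\overline{q'}}D_{p,\overline{q'}}$, and symmetrically $D_{r,\overline{rq}}=\sum_{\overline{q'}\parallel rq}c_{\overline{q'}}D_{r,\overline{q'}}$.

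It then remains to verify that each $D_{p,\overline{q'}}$ indeed belongs to $\mathfrak{B}_{2}$, which reduces to the parallelism check $p\parallel\overline{q'}$. Here I would use that $p\parallel qp$ (both have tail $v_0$ and head $h(p)$, since $t(q)=h(q)=v_0=t(p)$) together with $\overline{q'}\parallel qp$; as the parallel relation is an equivalence relation, transitivity gives $p\parallel\overline{q'}$, and since $p\in E$ and $\overline{q'}\in\mathscr{Q}$, the operator $D_{p,\overline{q'}}$ is a member of $\mathfrak{B}_{2}$. The analogous tail-and-head computation (now with $h(r)=v_0=h(q)$) handles $r\parallel\overline{q'}$ for the second sum. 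Consequently both sums lie in $\mathfrak{D}_{2}$, and hence so does $D_{\overline{q}}$. The only genuine point requiring care is this bookkeeping of tails and heads needed to confirm that parallelism survives the passage through Lemma \ref{lemma 4.2}; everything else is supplied directly by the linear combination of Proposition \ref{proposition 4.1} and the linearity of $D_{r,\overline{s}}$ in $\overline{s}$.
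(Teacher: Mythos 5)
Your proposal is correct and takes essentially the same route as the paper's own proof: the paper likewise observes that for any $r\in E$ and $s\in\mathscr{P}$ with $r\parallel s$, Lemma \ref{lemma 4.2} together with linearity of $D_{r,\overline{s}}$ in the second index gives $D_{r,\overline{s}}=\sum_{\overline{q}\in\mathscr{Q},\,\overline{q}\parallel s}c_{\overline{q}}D_{r,\overline{q}}\in\mathfrak{D}_2$, and then applies Proposition \ref{proposition 4.1}. You merely make explicit the head/tail bookkeeping and the uniqueness-of-extension justification for linearity that the paper leaves as ``it is clear that''.
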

\begin{proof}
For $r\in E, r\parallel s\in \mathscr{P}$; by Lemma \ref{lemma 4.2},
suppose $\overline{s}=\sum\limits_{\substack{\overline{q}\in
\mathscr{Q}\\\overline{q}\parallel s}}c_{\overline{q}}\overline{q}$,
and it is clear that
$D_{r,\overline{s}}=\sum\limits_{{\substack{\overline{q}\in
\mathscr{Q}\\\overline{q}\parallel
s}}}c_{\overline{q}}D_{r,\overline{q}}$, then use Proposition
\ref{proposition 4.1}.
\end{proof}

\begin{remark}\label{remark 4.1}
For $\overline{q}\in\mathscr{Q}, t(\overline{q})=h(\overline{q})$,
from Theorem \ref{theorem 3.2} and Corollary \ref{corollary 4.3}, we
know that $D_{\overline{q}}\in k\mathfrak{B}_{2}=\mathfrak{D}_{2}$,
but not in $k\mathfrak{B}_1=\mathfrak D_{1}$. Denote
$\mathfrak{D}_C:=k\{D_{\overline{q}}\ |\
\overline{q}\in\mathscr{Q},\ t(\overline{q})= h(\overline{q})\}$.
Then $\mathfrak{D}_{C}\subseteq\mathfrak{D}_{2}$ and
$\mathfrak{D}_{C}\cap \mathfrak{D}_{1}=0$.
\end{remark}

Denote by $\textsl{Inn-Diff}(k\Gamma, k\Gamma/I)$  the linear
space consisting of inner differential operators from $k\Gamma$ to $k\Gamma/I$. Then,  $\textsl{Inn-Diff}(k\Gamma, k\Gamma/I)=\mathfrak{D}_{1}+\mathfrak{D}_{C}$. Thus, we have
\begin{eqnarray*}
H^1(k\Gamma, k\Gamma/I)&=&\textsl{Diff}(k\Gamma, k\Gamma/I)/\textsl{Inn-Diff}(k\Gamma, k\Gamma/I)\\
&=&(\mathfrak{D}_{1}+\mathfrak{D}_{2})/(\mathfrak{D}_{1}+\mathfrak{D}_{C})\\
&\cong&\mathfrak{D}_{2}/(\mathfrak{D}_{2}\cap \mathfrak{D}_{C})\\
&\cong &\mathfrak{D}_{2}/ \mathfrak{D}_{C}.
\end{eqnarray*}
Since the basis of $k\Gamma/I$ given in Proposition \ref{proposition 2.2} contains the residue classes of $V$ and $E$, we can see that
the center of $k\Gamma/I$ as $k\Gamma$-bimodule and the center of
$k\Gamma/I$ as an algebra are the same, denoted by $Z(k\Gamma/I)$.
\begin{proposition}
Let $k\Gamma/I$ be a finite dimensional admissible algebra, then
\begin{equation}\label{3.23}
dim_kH^1(k\Gamma,k\Gamma/I)=|\mathfrak{B}_2|+dim_kZ(k\Gamma/I)-|\mathscr{Q}_C|.
\end{equation}

\end{proposition}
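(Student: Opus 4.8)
The plan is to start from the identification $H^1(k\Gamma,k\Gamma/I)\cong\mathfrak{D}_2/\mathfrak{D}_C$ already derived immediately before the statement, where the containment $\mathfrak{D}_C\subseteq\mathfrak{D}_2$ is guaranteed by Corollary \ref{corollary 4.3} and Remark \ref{remark 4.1}. Since $k\Gamma/I$ is finite dimensional, $\mathscr{Q}$ is a finite set, so $\mathfrak{D}_2$ and $\mathfrak{D}_C$ are finite dimensional and the quotient satisfies $\dim_k H^1(k\Gamma,k\Gamma/I)=\dim_k\mathfrak{D}_2-\dim_k\mathfrak{D}_C$. By Theorem \ref{theorem 3.2} the set $\mathfrak{B}_2$ is $k$-linearly independent, hence $\dim_k\mathfrak{D}_2=|\mathfrak{B}_2|$. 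Thus the entire task reduces to establishing the single identity $\dim_k\mathfrak{D}_C=|\mathscr{Q}_C|-\dim_k Z(k\Gamma/I)$.

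To compute $\dim_k\mathfrak{D}_C$ I would consider the $k$-linear inner-derivation map $\mathrm{ad}\colon k\Gamma/I\to\textsl{Inn-Diff}(k\Gamma,k\Gamma/I)$ sending $\overline{s}\mapsto D_{\overline{s}}$, which is linear in $\overline{s}$ because $D_{\overline{s}}(q)=\overline{sq}-\overline{qs}$ by \reff{2.13}. Its kernel consists of those $\overline{s}$ with $\overline{sq}=\overline{qs}$ for all $q\in\mathscr{P}$, that is, exactly the central elements, so $\ker(\mathrm{ad})=Z(k\Gamma/I)$. Restricting $\mathrm{ad}$ to the subspace $k\mathscr{Q}_C$ has image precisely $\mathfrak{D}_C$ by the definition of $\mathfrak{D}_C$, whence by rank--nullity $\dim_k\mathfrak{D}_C=|\mathscr{Q}_C|-\dim_k\big(Z(k\Gamma/I)\cap k\mathscr{Q}_C\big)$.

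The remaining and key point is to show $Z(k\Gamma/I)\subseteq k\mathscr{Q}_C$, so that the intersection above is all of $Z(k\Gamma/I)$. I would take a central $z=\sum_{\overline{q}\in\mathscr{Q}}c_{\overline{q}}\overline{q}$ and test centrality against the vertex idempotents $\overline{v}$, $v\in V$. Using $\overline{v}\,\overline{q}=\overline{q}$ iff $t(\overline{q})=v$ and $\overline{q}\,\overline{v}=\overline{q}$ iff $h(\overline{q})=v$, the equation $\overline{v}z=z\overline{v}$ compares $\sum_{t(\overline{q})=v}c_{\overline{q}}\overline{q}$ with $\sum_{h(\overline{q})=v}c_{\overline{q}}\overline{q}$; choosing $v=t(\overline{q}_0)$ for any $\overline{q}_0\in\mathscr{Q}_A$ forces $c_{\overline{q}_0}=0$, since $t(\overline{q}_0)\neq h(\overline{q}_0)$. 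Hence the support of $z$ lies in $\mathscr{Q}_C$, giving $Z(k\Gamma/I)\subseteq k\mathscr{Q}_C$ and therefore $\dim_k\mathfrak{D}_C=|\mathscr{Q}_C|-\dim_k Z(k\Gamma/I)$. Substituting this back yields $\dim_k H^1(k\Gamma,k\Gamma/I)=|\mathfrak{B}_2|+\dim_k Z(k\Gamma/I)-|\mathscr{Q}_C|$, which is \reff{3.23}. I expect the containment $Z(k\Gamma/I)\subseteq k\mathscr{Q}_C$ to be the only step beyond bookkeeping, and the sole care required there is to apply the multiplication conventions for the vertex idempotents correctly.
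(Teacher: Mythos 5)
Your proof is correct, and it reaches the crucial identity $\dim_k\mathfrak{D}_C=|\mathscr{Q}_C|-\dim_k Z(k\Gamma/I)$ by a route that is close to, but tidier than, the paper's. The paper works with the whole space of inner derivations: it runs the chain $\mathfrak{D}_1\oplus\mathfrak{D}_C=\textsl{Inn-Diff}(k\Gamma,k\Gamma/I)\cong(k\Gamma/I)/Z(k\Gamma/I)\cong k\mathscr{Q}_C/Z(k\Gamma/I)\oplus k\mathscr{Q}_A\cong k\mathscr{Q}_C/Z(k\Gamma/I)\oplus\mathfrak{D}_1$ and then cancels the summand $\mathfrak{D}_1$ to conclude $\mathfrak{D}_C\cong k\mathscr{Q}_C/Z(k\Gamma/I)$. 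You instead restrict the adjoint map $\overline{s}\mapsto D_{\overline{s}}$ to the subspace $k\mathscr{Q}_C$ and apply rank--nullity, and this buys you two things. First, you avoid the cancellation step entirely; that step is legitimate in the paper only because all spaces involved are finite dimensional (in general $A\oplus B\cong A\oplus C$ does not force $B\cong C$), whereas your argument never appeals to it. Second, you actually prove the containment $Z(k\Gamma/I)\subseteq k\mathscr{Q}_C$ by testing a central element against the vertex idempotents, a fact the paper invokes without proof even though it is exactly what justifies its third isomorphism. The two arguments rest on the same two pillars --- the kernel of the adjoint map is the center, and the center is supported on $\mathscr{Q}_C$ --- so conceptually this is the same computation, but your organization of it is self-contained and slightly more robust.
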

\begin{proof}By the discussion above,
$dim_kHH^1(k\Gamma,k\Gamma/I)=|\mathfrak{B}_2|-dim_k\mathfrak{D}_{C}$.
And\begin{eqnarray*}
\mathfrak{D}_1\oplus\mathfrak{D}_C=\textsl{Inn-Diff}(k\Gamma, k\Gamma/I)&\cong&(k\Gamma/I)/Z(k\Gamma/I)\\
&\cong&(k\mathscr{Q}_C\oplus k\mathscr{Q}_A)/Z(k\Gamma/I)\\
&\cong&k\mathscr{Q}_C/(Z(k\Gamma/I))\oplus k\mathscr{Q}_A\\
&\cong&k\mathscr{Q}_C/(Z(k\Gamma/I))\oplus \mathfrak{D}_1,
\end{eqnarray*}
where the first isomorphism is assured by Eq.(\ref{2.13}), the second and fourth isomorphisms are trivial, the third is because of the facts that $Z(k\Gamma/I)\subseteq k\mathscr{Q}_C$ and $Z(k\Gamma/I)\cap k\mathscr{Q}_A=0$. So
$\mathfrak{D}_C\cong k\mathscr{Q}_C/Z(k\Gamma/I)$ as $k$-linear spaces, it follows that
\begin{equation}
dim_kH^1(k\Gamma,k\Gamma/I)=dim_k\mathfrak{D}_{2}-dim_k\mathfrak{D}_C=|\mathfrak{B}_2|+dim_kZ(k\Gamma/I)-|\mathscr{Q}_C|.
\end{equation}
\end{proof}

If $k\Gamma/I$ is acyclic, then $Z(k\Gamma/I)\cong k$ and $|\mathscr{Q}_C|=|V|$. Thus, we have

\begin{corollary}
If $k\Gamma/I$ is an acyclic admissible algebra (in particular, if $\Gamma$ is an acyclic quiver), then
\begin{equation}
dim_kH^1(k\Gamma,k\Gamma/I)=|\mathfrak{B}_2|+1-|V|.
\end{equation}
\end{corollary}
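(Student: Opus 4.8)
The plan is to derive the formula directly from the general dimension formula Eq.(\ref{3.23}), namely $\dim_k H^1(k\Gamma,k\Gamma/I)=|\mathfrak{B}_2|+\dim_k Z(k\Gamma/I)-|\mathscr{Q}_C|$, by verifying the two numerical inputs under the acyclicity hypothesis: that $|\mathscr{Q}_C|=|V|$ and that $\dim_k Z(k\Gamma/I)=1$. Once these are in hand, substitution gives the claimed value $|\mathfrak{B}_2|+1-|V|$ immediately. Since an acyclic admissible algebra is finite dimensional (item (iii) following the definition of acyclicity), Eq.(\ref{3.23}) does apply. The parenthetical case of an acyclic quiver is subsumed, because by item (ii) an acyclic quiver forces $k\Gamma/I$ to be acyclic.

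First I would compute $|\mathscr{Q}_C|$. By definition $\mathscr{Q}_C=\{\overline{p}\in\mathscr{Q}\mid t(\overline{p})=h(\overline{p})\}$, and acyclicity means exactly $\mathscr{Q}_C\setminus\{\overline{v}\mid v\in V\}=\emptyset$, i.e. $\mathscr{Q}_C\subseteq\{\overline{v}\mid v\in V\}$. Conversely each vertex satisfies $t(\overline{v})=h(\overline{v})=v$, so $\{\overline{v}\mid v\in V\}\subseteq\mathscr{Q}_C$, giving $\mathscr{Q}_C=\{\overline{v}\mid v\in V\}$. Because $V\subseteq\mathscr{P}'$ and $\mathscr{Q}$ is a basis (Proposition \ref{proposition 2.2}), the classes $\overline{v}$ are pairwise distinct, whence $|\mathscr{Q}_C|=|V|$.

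The substantive step is to show $Z(k\Gamma/I)=k\cdot 1$, so that $\dim_k Z(k\Gamma/I)=1$. I would write a central element in the fixed basis as $z=\sum_{v\in V}\lambda_v\overline{v}+\sum_{\overline{q}\in\mathscr{Q}_A}\mu_{\overline{q}}\overline{q}$, which is the general form of an element of $k\Gamma/I=k\mathscr{Q}_A\oplus k\mathscr{Q}_C$ once we use $\mathscr{Q}_C=\{\overline{v}\}$ from the previous step. For each vertex $w$ I compare $\overline{w}z$ with $z\overline{w}$: since $\overline{w}\,\overline{q}=\overline{q}$ exactly when $t(\overline{q})=w$ and $\overline{q}\,\overline{w}=\overline{q}$ exactly when $h(\overline{q})=w$, centrality forces $\mu_{\overline{q}}=0$ for every $\overline{q}\in\mathscr{Q}_A$, since such a $\overline{q}$ has $t(\overline{q})\neq h(\overline{q})$ and so appears on only one side. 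Thus $z=\sum_v\lambda_v\overline{v}$. Next, for each arrow $p\in E$ (which is nonzero in $\mathscr{Q}$ because $E\subseteq\mathscr{P}'$), the identities $z\overline{p}=\lambda_{t(p)}\overline{p}$ and $\overline{p}z=\lambda_{h(p)}\overline{p}$ force $\lambda_{t(p)}=\lambda_{h(p)}$. As $\Gamma$ is connected, any two vertices are joined by a walk of arrows, so all $\lambda_v$ coincide and $z=\lambda\sum_v\overline{v}=\lambda\cdot 1$; conversely $1$ is central, so $Z(k\Gamma/I)=k\cdot 1$.

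The main obstacle is this computation of the center. The delicate point is that acyclicity is used in an essential way: it guarantees there are no cyclic basis vectors beyond the vertices, so that the only surviving central coefficients are the $\lambda_v$, and then connectedness pins these down to a single scalar. Without acyclicity, cyclic classes $\overline{q}\in\mathscr{Q}_C\setminus\{\overline{v}\}$ could contribute further central elements (as in $k\Gamma/k^n\Gamma$ for a loop $\Gamma$), so the clean value $\dim_k Z(k\Gamma/I)=1$ would fail; everything else is routine substitution into Eq.(\ref{3.23}).
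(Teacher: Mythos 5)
Your proposal is correct and follows the same route as the paper: the paper's own justification is precisely the observation that acyclicity gives $Z(k\Gamma/I)\cong k$ and $|\mathscr{Q}_C|=|V|$, followed by substitution into Eq.~(3.23). You have simply supplied the (correct) verification of these two facts, including the center computation via connectedness, which the paper leaves implicit.
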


On the other hand, when $\Gamma$ is a planar quiver and $k\Gamma/I$
is an acyclic admissible algebra, we can apply the approach of \cite{GL} to give a basis
of $HH^1(k\Gamma,k\Gamma/I)$. A planar quiver is a quiver with a
fixed embedding into the plane $\mathbb{R}^2$. The set $F$ of faces
of a planar quiver $\Gamma$ is the set of connected component of
$\mathbb{R}^2\backslash\Gamma$.

We will need the famous {\em Euler formula} on planar graph, see \cite{B}\cite{GT}, which states that for any finite connected planar graph (which can be thought as the underlying graph of a quiver $\Gamma$), we have
\begin{equation}\label{4.22}
|V|-|E|+|F|=2.
\end{equation}

For each face of $\Gamma$, its boundary is called a
\textbf{primitive cycle}. Let $\mathbbm{p}_0$ denote the boundary of
the unique unbounded face $f_0$ of $\Gamma$. Let $\Gamma_{\mathbb{P}}$
denote the set of primitive cycles of $\Gamma$ and
$\Gamma_{\mathbb{P}}^{-}:=\Gamma_{\mathbb{P}}\backslash\mathbbm{p}_0$.
Then clearly, the set $\Gamma_{\mathbb{P}}$ of primitive cycles of
$\Gamma$ is in bijection with the set $F$ of the faces of $\Gamma$.
So $|F|=|\Gamma_{\mathbb{P}}|$.

For a face $f\in F$, denote $\mathbbm{p}_f$ the corresponding
primitive cycle of $f$. Suppose $\mathbbm{p}_f$ is comprised of an
ordered list arrows $p_1,\cdots,p_s\in E$, define an operator from
$k\Gamma$ to $k\Gamma/I$
\begin{equation}
D_{\mathbbm{p}_f}:=\pm D_{p_1,\overline{p_1}}\pm\cdots\pm
D_{p_s,\overline{p_s}},
\end{equation}
where a $\pm D_{p_i,\overline{p_i}}$ is $+D_{p_i,\overline{p_i}}$ if
$p_i$ is in clockwise direction when viewed from the interior of the
face of $\mathbbm{p}_f$ and is $-D_{p_i,\overline{p_i}}$ otherwise.
We call $D_{\mathbbm{p}_f}$ a \textbf{face differential operator}
from $k\Gamma$ to $k\Gamma/I$. Let $\mathfrak{D}_{\mathbbm{P}}$ denote the linear space spanned by $\{D_{\mathbbm{P}}|\mathbbm{p}\in\Gamma_{\mathbb{P}}\}$, called the \textbf{space of face differential operators}.

The next lemma is similar to Theorem 4.9 in \cite{GL}.
\begin{lemma}\label{lemma 4.5}
Let $\Gamma$ be a planar quiver with the ground field $k$
of characteristic 0, then

(a) $dim\mathfrak{D}_V=|V|-1$;

(b) $dim\mathfrak{D}_{\mathbb{P}}=|F|-1=|\Gamma_{\mathbb{P}}^{-}|$;

(c) $\mathfrak{D}_V$ and $\mathfrak{D}_{\mathbb{P}}$ are linearly
disjoint subspaces of $\mathfrak{D}_E$.
\end{lemma}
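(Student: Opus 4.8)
The plan is to transport everything into the ``edge space'' $k^{E}$ by means of the identification furnished by Theorem \ref{theorem 3.2}. Since the arrow operators $\{D_{p,\overline{p}}\mid p\in E\}$ form a linearly independent subset of the standard basis $\mathfrak{B}_{2}$, they span $\mathfrak{D}_{E}$ freely, so $\mathfrak{D}_{E}\cong k^{E}$ via $D_{p,\overline{p}}\leftrightarrow e_{p}$. Under this identification the incidence and boundary maps of the planar graph appear. Writing $\partial_{1}\colon k^{E}\to k^{V}$ for $\partial_{1}e_{p}=h(p)-t(p)$, the defining formula for $D_{\overline{v}}$ gives $D_{\overline{v}}=-\partial_{1}^{T}(e_{v})$, so $\mathfrak{D}_{V}=\operatorname{im}\partial_{1}^{T}$; and writing $\partial_{2}\colon k^{F}\to k^{E}$ for the cellular boundary of the $2$-cells (faces) of the planar structure, the sign rule ``clockwise when viewed from the interior'' is precisely the boundary-orientation convention, so $D_{\mathbbm{p}_{f}}=\partial_{2}(e_{f})$ and $\mathfrak{D}_{\mathbb{P}}=\operatorname{im}\partial_{2}$. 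Verifying this dictionary---especially that the clockwise-from-interior signs match the cellular boundary---is the one bookkeeping point where I would be careful.

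For (a), I would compute the kernel of $v\mapsto D_{\overline{v}}$ directly. Since $\sum_{v}\lambda_{v}D_{\overline{v}}=\sum_{p}(\lambda_{t(p)}-\lambda_{h(p)})e_{p}$, it vanishes iff $\lambda_{t(p)}=\lambda_{h(p)}$ for every arrow $p$; connectivity of $\Gamma$ then forces $\lambda$ to be constant, so the kernel is the line of constants and $\dim\mathfrak{D}_{V}=|V|-1$. This step is valid over any field. For (b), each $D_{\mathbbm{p}_{f}}$ lies in $\ker\partial_{1}$ (a face boundary is a cycle), so $\mathfrak{D}_{\mathbb{P}}\subseteq\ker\partial_{1}$. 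Compactifying $\mathbb{R}^{2}$ to $S^{2}$ turns the unbounded face $f_{0}$ into an honest $2$-cell, and the sum of all faces with consistent orientation is the fundamental cycle, which yields the single relation $\sum_{f\in F}D_{\mathbbm{p}_{f}}=0$; hence $D_{\mathbbm{p}_{0}}$ lies in the span of the bounded faces and $\mathfrak{D}_{\mathbb{P}}$ is spanned by $\{D_{\mathbbm{p}_{f}}\mid f\neq f_{0}\}$. I would then invoke the classical planar fact that the bounded-face boundaries form a basis of the cycle space (geometrically, every cycle bounds a union of bounded faces and equals the signed sum of their boundaries), so these $|F|-1$ operators are independent. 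Combined with Euler's formula \reff{4.22}, which gives $\dim\ker\partial_{1}=|E|-(|V|-1)=|F|-1$, this forces $\mathfrak{D}_{\mathbb{P}}=\ker\partial_{1}$ and $\dim\mathfrak{D}_{\mathbb{P}}=|F|-1=|\Gamma_{\mathbb{P}}^{-}|$.

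For (c), the previous paragraph identifies $\mathfrak{D}_{V}=\operatorname{im}\partial_{1}^{T}$ (the cut space) and $\mathfrak{D}_{\mathbb{P}}=\ker\partial_{1}$ (the cycle space), and these are orthogonal complements for the standard bilinear form on $k^{E}$, since $\langle\partial_{1}^{T}y,x\rangle=\langle y,\partial_{1}x\rangle$. If $x\in\mathfrak{D}_{V}\cap\mathfrak{D}_{\mathbb{P}}$, write $x=\partial_{1}^{T}y$; then $\langle x,x\rangle=\langle y,\partial_{1}x\rangle=0$, i.e.\ $\sum_{i}x_{i}^{2}=0$, forcing $x=0$. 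The hard part is exactly this last implication: $\sum_{i}x_{i}^{2}=0\Rightarrow x=0$ requires the sum-of-squares form to be anisotropic, which is where characteristic $0$ is genuinely used (more precisely, one works over a formally real field such as $\mathbb{R}$ or $\mathbb{Q}$); I would either assume $k\subseteq\mathbb{R}$ or state the resulting Hodge-type splitting accordingly. As a by-product, since $\dim\mathfrak{D}_{V}+\dim\mathfrak{D}_{\mathbb{P}}=(|V|-1)+(|F|-1)=|E|$ by Euler's formula, the trivial intersection upgrades to a direct sum $k^{E}=\mathfrak{D}_{V}\oplus\mathfrak{D}_{\mathbb{P}}$, which should be convenient in the sequel.
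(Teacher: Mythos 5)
Your transport of the problem into the edge space $k^{E}$ via $D_{p,\overline{p}}\leftrightarrow e_{p}$ is sound (Theorem \ref{theorem 3.2} does make $\mathfrak{B}_{E}$ linearly independent), and parts (a) and (b) are correct. Your proof of (a) is the paper's own argument in incidence-matrix clothing: the paper rewrites the vanishing of $\sum_i a_i D_{\overline{v_i}}$ as the statement that $\sum_i a_i\overline{v_i}$ is central in $k\Gamma/I$, then kills the coefficients one arrow at a time using connectedness, which is exactly your propagation of $\lambda_{t(p)}=\lambda_{h(p)}$. For (b) you genuinely differ: where the paper proves independence of the bounded-face operators directly (an edge shared by $\mathbbm{p}_{0}$ and $\mathbbm{p}_{j}$ forces $b_{j}=0$, then one propagates across adjacent faces), you cite the classical fact that bounded-face boundaries form a basis of the cycle space and combine it with Euler's formula; this is a legitimate trade, and your identification $\mathfrak{D}_{\mathbb{P}}=\ker\partial_{1}$ is a useful by-product the paper never makes explicit. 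Note also that for (c) the paper gives no self-contained argument at all: it transfers the statement along the correspondence $D_{p,p}\leftrightarrow D_{p,\overline{p}}$ and cites Theorem 4.9 of \cite{GL}, so your attempt at a direct proof is more ambitious than the text.

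But your (c) has a genuine gap, which you half-admit. The implication $\sum_{i}x_{i}^{2}=0\Rightarrow x=0$ is false over a general field of characteristic $0$: over $k=\mathbb{C}$ or $k=\mathbb{Q}(i)$ take $x=(1,i)$. So the orthogonality argument proves the lemma only for formally real fields, and your fallback ``assume $k\subseteq\mathbb{R}$'' establishes a strictly weaker statement than the one claimed; the lemma, and the results built on it (Theorems \ref{thm 3.7}, \ref{tm 4.7}, \ref{thm 5.3}), are asserted for every field of characteristic $0$. The gap is fixable inside your framework by replacing positivity with rationality. Observe that $\operatorname{im}\partial_{1}^{T}\cap\ker\partial_{1}=0$ is equivalent to $\ker(\partial_{1}\partial_{1}^{T})=\ker(\partial_{1}^{T})$, i.e.\ to $\operatorname{rank}(\partial_{1}\partial_{1}^{T})=\operatorname{rank}(\partial_{1}^{T})$, since the inclusion $\ker(\partial_{1}\partial_{1}^{T})\supseteq\ker(\partial_{1}^{T})$ always holds. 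Both $\partial_{1}\partial_{1}^{T}$ and $\partial_{1}^{T}$ are integer matrices, and the rank of an integer matrix is detected by its minors, hence is the same over $k$ as over $\mathbb{Q}$. Over $\mathbb{Q}$ your sum-of-squares argument is valid, so the rank equality holds there, and therefore over every characteristic-$0$ field, which is exactly (c). Without some such descent to $\mathbb{Q}$ (or a combinatorial argument as in \cite{GL}), your proof as written does not establish the lemma in the stated generality.
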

\begin{proof}
(a) Denote $\gamma_0=|V|$. Since $\overline{e}=\sum\limits_{i=1}^{\gamma_0}\overline{v_i}$ is the identity of $k\Gamma/I$, which clearly lies in the center of $k\Gamma/I$, we have
\begin{equation}
D_{\overline{e}}=\sum\limits_{i=1}^{\gamma_0}D_{\overline{v_i}}=0.
\end{equation}
So $dim\mathfrak{D}_V\leq\gamma_0-1$. We next prove that $dim\mathfrak{D}_V\geq\gamma_0-1$. We may assume that $\gamma_0\geq2$.

We claim that  any $\gamma_0-1$ elements of $\{D_{\overline{v_i}}|i=1,\cdots,\gamma_0\}$ is linearly independent. In fact, suppose
$\sum\limits_{i=1}^{\gamma_0-1}a_iD_{\overline{v_i}}=0$,
where $a_i\in k$, which  means that $\sum\limits_{i=1}^{\gamma_0-1}a_i\overline{v_i}$ is in the center of $k\Gamma/I$. Since $\Gamma$ is connected, let the vertex $v_{\gamma_0}$ be connected to $v_i$ by an arrow $p$  for $i\neq\gamma_0$. We may assume that $t(p)=v_i$ and $h(p)=v_{\gamma_0}$. We have $$a_i\overline{p}=(\sum\limits_{i=1}^{\gamma_0-1}a_i\overline{v_i})\overline{p}=\overline{p}(\sum\limits_{i=1}^{\gamma_0-1}a_i\overline{v_i})=\overline{0}.$$ so $a_i=0$. Note that $\Gamma$ is connected, we can repeat this process to get $a_j=0$ for any $j$.

(b) Let $|F|=\gamma_2$. Through simple observation of planar quiver, we can see that if $p\in E$ is in the boundary, then it is at most in the boundary of two primitive cycles. Note that if $p\in E$ is in the boundary of two primitive cycles $\mathbbm{P}_1$ and $\mathbbm{P}_2$, then the sign of $D_{p,\overline{p}}$ in $D_{\mathbbm{P}_1}$ and $D_{\mathbbm{P}_2}$ are opposite. If $p\in E$ is in the boundary of only one primitive cycle $\mathbbm{P}$, then $D_{p,\overline{p}}$ occurs twice in $D_{\mathbbm{P}}$ with opposite sign. Thus we have
\begin{equation}
\sum\limits_{j=0}^{\gamma_2-1}D_{\mathbbm{P}_i}=0,
\end{equation}
where $\mathbbm{P}_0$ denotes the primite cycle corresponding to $f_0$ as above. So $dim\mathfrak{D}_{\mathbb{P}}\leq|F|-1$.

We next prove that $dim\mathfrak{D}_{\mathbb{P}}\geq|F|-1$. We may assume that $|F|\geq2$. Suppose
\begin{equation}
\sum\limits_{j=1}^{\gamma_2-1}b_jD_{\mathbbm{P}_j}=0,
\end{equation}
where $b_j\in k$. If $p\in E$ is in the boundary of $\mathbbm{P}_0$ and $\mathbbm{P}_j$ for $j>0$, then $\overline{0}=\sum\limits_{j=1}^{\gamma_2-1}b_jD_{\mathbbm{P}_j}(p)=\pm b_j\overline{p}$. So we have $b_j=0$. This means that if $\mathbbm{P}_j$ and $\mathbbm{P}_0$ have a common $p\in E$ in their boundary, then $b_j=0$. Replace $\mathbbm{P}_0$ with $\mathbbm{P}_j$, and repeat this process. Since the quiver is connected, we can get $b_j=0$ for any $j>0$.

(c)  From \cite{GL} and Theorem \ref{theorem 3.2}, we know that
$\mathfrak{B}^o_E:=\{D_{p,p}| p\in E\}$ and
$\mathfrak{B}_E:=\{D_{p,\overline{p}}| p\in E\}$ are $k$-linearly
independent sets in $\textsl{Diff}(k\Gamma)$ and
$\textsl{Diff}(k\Gamma, k\Gamma/I)$ respectively. Based on this,
$D_{\overline{v_i}}$ and $D_{\mathbbm{p}_f}$ can be linearly
expressed by using  $\mathfrak{B}_E$, as well as $D_{v_i}$ and
$D_{\mathfrak{c}_f}$ by using $\mathfrak{B}^o_E$ in  \cite{GL}.
Under this  correspondence, referring to Theorem 4.9 in \cite{GL} in
the same process, we obtain that  $\mathfrak{D}_V$ and
$\mathfrak{D}_{\mathbb{P}}$ are linearly disjoint subspaces of
$\mathfrak{D}_E$.
\end{proof}

By this lemma,  $\mathfrak{B}_{\mathbbm{P}}:=\{D_{\mathbbm{P}}|\mathbbm{p}\in\Gamma_{\mathbb{P}}^{-}\}$ is a basis of $\mathfrak{D}_{\mathbbm{P}}$.
\begin{theorem}\label{thm 3.7}
Let $\Gamma$ be a planar quiver and $k\Gamma/I$ be an acyclic admissible algebra with the ground field $k$ of characteristic 0.
Then the union set
$$(\mathfrak{B}_{2}\backslash\mathfrak{B}_E)\cup\mathfrak{B}_{\mathbb{P}}$$
is a basis of $H^1(k\Gamma,k\Gamma/I)$.
\end{theorem}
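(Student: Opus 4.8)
The plan is to realize $H^1(k\Gamma,k\Gamma/I)$ as a single quotient of $\mathfrak{D}_2$ and then split $\mathfrak{D}_2$ into a direct sum in which the two families of the claimed basis occupy complementary summands. First I would record the consequence of acyclicity: since $k\Gamma/I$ is acyclic we have $\mathscr{Q}_C=\{\overline v\mid v\in V\}$, so that $\mathfrak{D}_C=k\{D_{\overline v}\mid v\in V\}=\mathfrak{D}_V$. Feeding this into the isomorphism chain already established just before the theorem gives
\begin{equation*}
H^1(k\Gamma,k\Gamma/I)\cong\mathfrak{D}_2/\mathfrak{D}_C=\mathfrak{D}_2/\mathfrak{D}_V .
\end{equation*}
Thus it suffices to produce a basis of the right-hand quotient consisting of the images of $(\mathfrak{B}_2\backslash\mathfrak{B}_E)\cup\mathfrak{B}_{\mathbb{P}}$.

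The key step is to show $\mathfrak{D}_E=\mathfrak{D}_V\oplus\mathfrak{D}_{\mathbb{P}}$. Both subspaces lie in $\mathfrak{D}_E$: the vertex operators do by the remark following Proposition \ref{proposition 4.1}, and each face operator $D_{\mathbbm{p}_f}$ is by definition a signed sum of arrow operators $D_{p_i,\overline{p_i}}$. By Lemma \ref{lemma 4.5}, $\mathfrak{D}_V$ and $\mathfrak{D}_{\mathbb{P}}$ are linearly disjoint with $\dim\mathfrak{D}_V=|V|-1$ and $\dim\mathfrak{D}_{\mathbb{P}}=|F|-1$, while $\dim\mathfrak{D}_E=|\mathfrak{B}_E|=|E|$ because the arrow operators are linearly independent (noted in the proof of Lemma \ref{lemma 4.5}(c)). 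Euler's formula $|V|-|E|+|F|=2$ then yields $(|V|-1)+(|F|-1)=|E|$, so the disjoint sum $\mathfrak{D}_V\oplus\mathfrak{D}_{\mathbb{P}}$ already exhausts $\mathfrak{D}_E$ by dimension. Hence $\mathfrak{D}_{\mathbb{P}}$ is a complement of $\mathfrak{D}_V$ in $\mathfrak{D}_E$, and $\mathfrak{B}_{\mathbb{P}}$ maps to a basis of $\mathfrak{D}_E/\mathfrak{D}_V$.

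Finally I would assemble the pieces. Since $\mathfrak{B}_2$ is a basis of $\mathfrak{D}_2$ by Theorem \ref{theorem 3.2} and $\mathfrak{B}_E\subseteq\mathfrak{B}_2$, partitioning this basis gives $\mathfrak{D}_2=\mathfrak{D}_E\oplus W$ with $W:=\mathrm{span}(\mathfrak{B}_2\backslash\mathfrak{B}_E)$. Combined with the previous step this reads $\mathfrak{D}_2=\mathfrak{D}_V\oplus\mathfrak{D}_{\mathbb{P}}\oplus W$, whence $\mathfrak{D}_2/\mathfrak{D}_V\cong\mathfrak{D}_{\mathbb{P}}\oplus W$ and the images of $\mathfrak{B}_{\mathbb{P}}\cup(\mathfrak{B}_2\backslash\mathfrak{B}_E)$ form a basis. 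The real content is carried entirely by Lemma \ref{lemma 4.5}: the hard part is the linear disjointness together with the two dimension counts there, so once those are invoked the only things to watch are the clean separation of $\mathfrak{B}_E$ from $\mathfrak{B}_2$ and the inclusions $\mathfrak{D}_V\subseteq\mathfrak{D}_E\subseteq\mathfrak{D}_2$, after which the Euler-formula dimension match closes the argument with no further computation.
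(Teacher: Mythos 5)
Your proposal is correct and follows essentially the same route as the paper's proof: both identify $\mathfrak{D}_C=\mathfrak{D}_V$ via acyclicity, use Lemma \ref{lemma 4.5} together with Euler's formula to obtain $\mathfrak{D}_E=\mathfrak{D}_V\oplus\mathfrak{D}_{\mathbb{P}}$, and then split $\mathfrak{D}_2=\mathfrak{D}_E\oplus k\{\mathfrak{B}_2\backslash\mathfrak{B}_E\}$ before passing to the quotient $\mathfrak{D}_2/\mathfrak{D}_V$. The only difference is that you spell out explicitly the dimension count $(|V|-1)+(|F|-1)=|E|$ that the paper compresses into the phrase ``by the Euler formula and Lemma \ref{lemma 4.5}.''
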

\begin{proof}
By the Euler formula and  Lemma \ref{lemma 4.5}, we can get
$\mathfrak{D}_E=\mathfrak{D}_V\oplus\mathfrak{D}_{\mathbb{P}}$.
Because $k\Gamma/I$ is acyclic, we have $\mathfrak{D}_{C}=\mathfrak{D}_{V}$, then
\begin{eqnarray*}
H^1(k\Gamma, k\Gamma/I)&\cong &\mathfrak{D}_{2}/
\mathfrak{D}_{C}\\
&\cong &(\mathfrak{D}_E\oplus
k\{\mathfrak{B}_{2}\backslash\mathfrak{B}_E\})/\mathfrak{D}_{V}\\
&\cong&\mathfrak{D}_{\mathbb{P}}\oplus k\{\mathfrak{B}_{2}\backslash\mathfrak{B}_E\}\\
&\cong&k\mathfrak{B}_{\mathbbm{P}}\oplus
k\{\mathfrak{B}_{2}\backslash\mathfrak{B}_E\}.
\end{eqnarray*}\end{proof}

\section{$HH^1(k\Gamma/I)$ for an admissible algebra $k\Gamma/I$ }
\begin{lemma}\label{lemma 5.1}
A differential operator of $k\Gamma/I$ can induce naturally a
differential operator from $k\Gamma$ to $k\Gamma/I$. Conversely, a
differential operator $D$ from $k\Gamma$ to $k\Gamma/I$ satisfying
$D(I)=\overline{0}$ can induce a differential operator of
$k\Gamma/I$.
\end{lemma}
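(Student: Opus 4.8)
The plan is to route both implications through the canonical projection $\pi:k\Gamma\to k\Gamma/I$, $a\mapsto\overline{a}$, while keeping careful track of the two bimodule structures in play. Following the definition of a differential operator with $A=M=k\Gamma/I$, a differential operator of $k\Gamma/I$ is a derivation $\delta:k\Gamma/I\to k\Gamma/I$ for the regular bimodule, satisfying $\delta(uv)=\delta(u)v+u\delta(v)$; by contrast, a differential operator from $k\Gamma$ to $k\Gamma/I$ is a derivation into $k\Gamma/I$ regarded as a $k\Gamma$-bimodule through $\pi$, i.e. with the actions $a\cdot m:=\overline{a}\,m$ and $m\cdot a:=m\,\overline{a}$ for $a\in k\Gamma$ and $m\in k\Gamma/I$. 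The whole argument is then a matter of unwinding these two versions of the Leibniz identity and checking they correspond term by term.

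For the first assertion, given a differential operator $\delta:k\Gamma/I\to k\Gamma/I$, I would simply set $D:=\delta\circ\pi$. It is $k$-linear as a composite of $k$-linear maps, and for $x,y\in k\Gamma$ the multiplicativity of $\pi$ together with the Leibniz rule for $\delta$ yields
\[
D(xy)=\delta(\overline{x}\,\overline{y})=\delta(\overline{x})\,\overline{y}+\overline{x}\,\delta(\overline{y})=D(x)\cdot y+x\cdot D(y),
\]
which is exactly \reff{Leibnitz rule} for the $k\Gamma$-bimodule structure described above; hence $D$ is a differential operator from $k\Gamma$ to $k\Gamma/I$. Observe that such a $D$ automatically kills the ideal, since $D(I)=\delta(\pi(I))=\delta(\overline{0})=\overline{0}$, so this construction always lands in the class treated by the converse.

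For the converse, suppose $D:k\Gamma\to k\Gamma/I$ is a differential operator with $D(I)=\overline{0}$, and define $\delta:k\Gamma/I\to k\Gamma/I$ by $\delta(\overline{a}):=D(a)$. The one point that genuinely needs the hypothesis is well-definedness: if $\overline{a}=\overline{b}$ then $a-b\in I$, so $k$-linearity gives $D(a)-D(b)=D(a-b)\in D(I)=\overline{0}$, whence $\delta(\overline{a})=\delta(\overline{b})$. Once $\delta$ is a well-defined $k$-linear map, the Leibniz rule transports verbatim: for $a,b\in k\Gamma$,
\[
\delta(\overline{a}\,\overline{b})=\delta(\overline{ab})=D(ab)=D(a)\cdot b+a\cdot D(b)=\delta(\overline{a})\,\overline{b}+\overline{a}\,\delta(\overline{b}),
\]
so $\delta$ is a differential operator of $k\Gamma/I$.

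The main, and essentially the only, obstacle is the well-definedness in the converse direction, and that is precisely where the assumption $D(I)=\overline{0}$ is consumed; everything else is routine, provided one is careful that the $k\Gamma$-action on $k\Gamma/I$ is the one factoring through $\pi$, so that $D(x)\cdot y=\delta(\overline{x})\,\overline{y}$ and $x\cdot D(y)=\overline{x}\,\delta(\overline{y})$ and the two forms of the identity line up. It is worth recording that the two constructions are mutually inverse, so the lemma actually identifies the differential operators of $k\Gamma/I$ with exactly those differential operators from $k\Gamma$ to $k\Gamma/I$ that annihilate $I$; this is the identification used implicitly when passing from $H^1(k\Gamma,k\Gamma/I)$ to $HH^1(k\Gamma/I)$.
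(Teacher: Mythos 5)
Your proof is correct and takes essentially the same approach as the paper: the paper also composes a differential operator of $k\Gamma/I$ with the canonical projection (noting that it is an algebra homomorphism) for the forward direction, and leaves the converse as a direct verification. You have simply written out the details the paper's terse proof omits — the $k\Gamma$-bimodule structure on $k\Gamma/I$ induced by $\pi$, the well-definedness step where $D(I)=\overline{0}$ is consumed, and the two Leibniz-rule checks.
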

\begin{proof}
Denote $p$ the canonical map from $k\Gamma$ to $k\Gamma/I$. Given a differential operator $D$ of $k\Gamma/I$, we claim that the composition $Dp$ is a differential operator from $k\Gamma$ to $k\Gamma/I$.  Note that the canonical map from $k\Gamma$ to $k\Gamma/I$ is an
algebra homomorphism, it can be directly verified. The converse result can be shown directly, too.
\end{proof}
For a differential operator $D$ from $k\Gamma$ to $k\Gamma/I$
satisfying $D(I)=\overline{0}$, we denote $\overline{D}$ the induced
differential operator on $k\Gamma/I$. Write
$$\mathfrak{F}(I):=\{D\ |\
D\in\textsl{Diff}(k\Gamma,k\Gamma/I),\ D(I)=\overline{0}\},\ \ \ \mathfrak{F}_i(I):=\{D\ |\
D\in\mathfrak{D}_i,\ D(I)=\overline{0}\}\ \text{for}\ i=1,2.$$
It is clear that $D_{\overline{s}}(I)=\overline{0}$ for $s\in
\mathscr{P}$. So $\mathfrak{F}_1(I)$=$\mathfrak{D}_1$ and $\mathfrak{F}(I)=\mathfrak{D}_1\oplus\mathfrak{F}_2(I)$.
\begin{lemma}\label{lemma 5.2}
$\mathfrak{F}(I)\cong \textsl{Diff}(k\Gamma/I)$  as $k$-linear spaces.
\end{lemma}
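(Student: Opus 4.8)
The plan is to exhibit the isomorphism explicitly using the two constructions already supplied by Lemma \ref{lemma 5.1}. Let $p: k\Gamma \to k\Gamma/I$ denote the canonical projection. Define a map $\Phi: \textsl{Diff}(k\Gamma/I) \to \mathfrak{F}(I)$ by sending a differential operator $\delta$ of $k\Gamma/I$ to the composite $\Phi(\delta) := \delta \circ p$. By the first half of Lemma \ref{lemma 5.1}, $\delta \circ p$ is a differential operator from $k\Gamma$ to $k\Gamma/I$; moreover $(\delta\circ p)(I) = \delta(p(I)) = \delta(\overline 0) = \overline 0$, so $\Phi(\delta)$ indeed lies in $\mathfrak{F}(I)$. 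In the other direction, define $\Psi: \mathfrak{F}(I) \to \textsl{Diff}(k\Gamma/I)$ by $\Psi(D) := \overline D$, the differential operator of $k\Gamma/I$ induced by $D$ via the second half of Lemma \ref{lemma 5.1}, which applies precisely because $D(I) = \overline 0$ for $D \in \mathfrak{F}(I)$.

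Next I would check that $\Phi$ and $\Psi$ are $k$-linear. This is immediate: composition with the fixed linear map $p$ respects addition and scalar multiplication, so $\Phi$ is linear; and the passage $D \mapsto \overline D$ is linear because $\overline D$ is characterized by $\overline D(\overline x) = D(x)$ for $x\in k\Gamma$, a prescription that is visibly additive and homogeneous in $D$.

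Finally I would verify that $\Phi$ and $\Psi$ are mutually inverse, which is the crux of the argument. For $\delta \in \textsl{Diff}(k\Gamma/I)$, the induced operator $\overline{\delta\circ p}$ satisfies $\overline{\delta \circ p}(\overline x) = (\delta\circ p)(x) = \delta(\overline x)$ for all $x$, whence $\Psi(\Phi(\delta)) = \overline{\delta\circ p} = \delta$. Conversely, for $D \in \mathfrak{F}(I)$ and any $x \in k\Gamma$ we have $(\overline D \circ p)(x) = \overline D(\overline x) = D(x)$, so $\Phi(\Psi(D)) = \overline D \circ p = D$. Thus $\Psi\Phi = \mathrm{id}$ and $\Phi\Psi = \mathrm{id}$, giving the desired $k$-linear isomorphism $\mathfrak{F}(I) \cong \textsl{Diff}(k\Gamma/I)$.

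The only subtle point, and the place where one must be careful rather than face a genuine obstacle, is the well-definedness of $\overline D$: the formula $\overline D(\overline x) = D(x)$ requires that $D$ vanish on $I$, which is exactly the defining condition of $\mathfrak{F}(I)$. Once this is granted, as it is by Lemma \ref{lemma 5.1}, the entire argument reduces to the two one-line computations above, so no serious difficulty remains. Since the whole content is carried by Lemma \ref{lemma 5.1}, the proof is essentially a bookkeeping verification that the two constructions there are inverse bijections that happen to be linear.
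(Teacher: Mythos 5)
Your proposal is correct and takes essentially the same route as the paper: both rest entirely on Lemma \ref{lemma 5.1} and the map $D \longmapsto \overline{D}$. The only difference is bookkeeping — the paper establishes bijectivity via surjectivity (from the proof of Lemma \ref{lemma 5.1}) together with an injectivity argument that invokes Lemma \ref{lemma 2.2} and the fact that $\overline{p}\neq\overline{0}$ for $p\in V\cup E$, whereas you write down the explicit inverse $\delta \longmapsto \delta\circ p$, which makes injectivity automatic and is, if anything, slightly cleaner.
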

\begin{proof}
The map from $\mathfrak{F}(I)$ to $\textsl{Diff}(k\Gamma/I)$ is as follows,
$$\mathfrak{F}(I)\longrightarrow \textsl{Diff}(k\Gamma/I),\ D\longmapsto \overline{D}.$$
The proof of Lemma \ref{lemma 5.1} assures the map from $\mathfrak{F}(I)$ to
$\textsl{Diff}(k\Gamma/I)$ is surjective. As for the injectivity,
suppose $D_1, D_2\in\mathfrak{F}(I)$ and $D_1\neq D_2$, so according to Lemma \ref{lemma 2.2}, there
exists a path $p\in V\cup E$ such that $D_1(p)\neq D_2(p)$. Since
$\overline{0}\neq\overline{p}\in k\Gamma/I$, $\overline{D}_1(\overline{p})\neq
\overline{D}_2(\overline{p})$.
\end{proof}

By this lemma, we can think $\textsl{Diff}(k\Gamma/I)$  is a $k$-subspace of $\textsl{Diff}(k\Gamma,k\Gamma/I)$.

From Lemma \ref{lemma 5.2},
we have
\begin{equation}\label{4.29}
HH^1(k\Gamma/I)\cong\mathfrak{F}(I)/(\mathfrak{D}_1\oplus\mathfrak{D}_{C})\cong(\mathfrak{D}_1\oplus\mathfrak{F}_2(I))/(\mathfrak{D}_1\oplus\mathfrak{D}_{C})\cong\mathfrak{F}_2(I)/\mathfrak{D}_{C}
\end{equation} as linear spaces. This means
that $HH^1(k\Gamma/I)$ can be embedded into $H^1(k\Gamma,k\Gamma/I)\cong\mathfrak{D}_2/\mathfrak{D}_{C}$. Moreover, we have the next proposition.
\begin{proposition}\label{pro 4.3}
Suppose $k\Gamma/I$ is a finite dimensional admissible algebra, then
\begin{equation}\label{5.28}
dim_kHH^1(k\Gamma/I)=dim_k\mathfrak{F}_2(I)+dim_kZ(k\Gamma/I)-|\mathscr{Q}_C|.
\end{equation}
\end{proposition}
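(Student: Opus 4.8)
The plan is to derive the dimension formula in \reff{5.28} directly from the chain of isomorphisms already established in \reff{4.29}, namely $HH^1(k\Gamma/I)\cong\mathfrak{F}_2(I)/\mathfrak{D}_{C}$ as $k$-linear spaces. Since everything in sight is finite dimensional (the algebra $k\Gamma/I$ is finite dimensional by hypothesis, and $\mathfrak{D}_2$, hence its subspace $\mathfrak{F}_2(I)$ and $\mathfrak{D}_C$, are finite dimensional by Theorem \ref{theorem 3.2}), the isomorphism immediately yields
\begin{equation*}
\dim_k HH^1(k\Gamma/I)=\dim_k\mathfrak{F}_2(I)-\dim_k\mathfrak{D}_C.
\end{equation*}
The whole task therefore reduces to identifying $\dim_k\mathfrak{D}_C$ in the terms appearing on the right-hand side of \reff{5.28}.

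First I would establish that $\mathfrak{D}_C\subseteq\mathfrak{F}_2(I)$, so that the quotient $\mathfrak{F}_2(I)/\mathfrak{D}_C$ in \reff{4.29} makes sense as written: every inner operator $D_{\overline{s}}$ kills $I$ (as noted just before Lemma \ref{lemma 5.1}), and by Corollary \ref{corollary 4.3} each $D_{\overline{q}}$ with $t(\overline{q})=h(\overline{q})$ lies in $\mathfrak{D}_2$; together these give $\mathfrak{D}_C\subseteq\mathfrak{D}_2$ and $\mathfrak{D}_C\subseteq\mathfrak{F}(I)$, whence $\mathfrak{D}_C\subseteq\mathfrak{F}_2(I)$. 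Next, and this is the crux, I would recompute $\dim_k\mathfrak{D}_C$ exactly as in the proof of the earlier proposition giving \reff{3.23}. There the key computation was the isomorphism $\mathfrak{D}_C\cong k\mathscr{Q}_C/Z(k\Gamma/I)$ of $k$-linear spaces, obtained from the inner-derivation isomorphism $\textsl{Inn-Diff}(k\Gamma,k\Gamma/I)\cong(k\Gamma/I)/Z(k\Gamma/I)$ supplied by \reff{2.13}, together with the facts $Z(k\Gamma/I)\subseteq k\mathscr{Q}_C$ and $Z(k\Gamma/I)\cap k\mathscr{Q}_A=0$. This gives
\begin{equation*}
\dim_k\mathfrak{D}_C=|\mathscr{Q}_C|-\dim_k Z(k\Gamma/I).
\end{equation*}
Substituting this into the displayed difference produces exactly \reff{5.28}.

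The main obstacle, and the only nonroutine point, is justifying $\dim_k\mathfrak{D}_C=|\mathscr{Q}_C|-\dim_k Z(k\Gamma/I)$ in the present setting: I must be sure that the isomorphism $\mathfrak{D}_C\cong k\mathscr{Q}_C/Z(k\Gamma/I)$ used for \reff{3.23} is genuinely a statement about $\mathfrak{D}_C$ alone and does not secretly depend on the ambient space being all of $\mathfrak{D}_2$ rather than $\mathfrak{F}_2(I)$. Because $\mathfrak{D}_C$ is defined intrinsically as $k\{D_{\overline{q}}\mid \overline{q}\in\mathscr{Q},\,t(\overline{q})=h(\overline{q})\}$ and its dimension was computed independently in the proof of \reff{3.23}, this number is unaffected by whether we sit $\mathfrak{D}_C$ inside $\mathfrak{D}_2$ or inside $\mathfrak{F}_2(I)$, so the same count carries over verbatim. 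Once that observation is in place, the proof is a one-line substitution; I would simply cite the relevant portion of the proof of \reff{3.23} rather than repeat the Zorn-free linear algebra.
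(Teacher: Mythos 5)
Your proposal is correct and follows essentially the same route as the paper's own proof: both reduce via \reff{4.29} to computing $\dim_k\mathfrak{D}_C$, and both identify $\mathfrak{D}_C\cong k\mathscr{Q}_C/Z(k\Gamma/I)$ by reusing the inner-derivation computation from the proof of \reff{3.23}, which yields $\dim_k\mathfrak{D}_C=|\mathscr{Q}_C|-\dim_kZ(k\Gamma/I)$. Your explicit check that $\mathfrak{D}_C\subseteq\mathfrak{F}_2(I)$ is a detail the paper leaves implicit, but it does not change the argument.
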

\begin{proof}
Note that $k\{\mathscr{Q}_C\}/(Z(k\Gamma/I))\cong\mathfrak{D}_{C}$ as linear spaces. By Equation $\ref{5.28}$, we have
$$dim_kHH^1(k\Gamma/I)=dim_k\mathfrak{F}_2(I)-dim_k\mathfrak{D}_{C}=dim_k\mathfrak{F}_2(I)+dim_kZ(k\Gamma/I)-|\mathscr{Q}_C|.$$
\end{proof}
\begin{corollary}\label{cor 4.4}
If $k\Gamma/I$ is an acyclic admissible algebra (in particular, if $\Gamma$ is an acyclic quiver), then
\begin{equation}
dim_kHH^1(k\Gamma/I)=dim_k\mathfrak{F}_2(I)+1-|V|.
\end{equation}
\end{corollary}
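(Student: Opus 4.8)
The plan is to derive this directly from Proposition \ref{pro 4.3}, whose general formula
$$dim_kHH^1(k\Gamma/I)=dim_k\mathfrak{F}_2(I)+dim_kZ(k\Gamma/I)-|\mathscr{Q}_C|$$
already isolates the only two data that specialize in the acyclic case. So the entire task reduces to establishing, under the acyclicity hypothesis, the two scalar identities $|\mathscr{Q}_C|=|V|$ and $dim_kZ(k\Gamma/I)=1$; substituting them into the display yields $dim_kHH^1(k\Gamma/I)=dim_k\mathfrak{F}_2(I)+1-|V|$.

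The count $|\mathscr{Q}_C|=|V|$ is immediate from the definition: acyclicity means $\mathscr{Q}_C\backslash\{\overline v\mid v\in V\}=\emptyset$, hence $\mathscr{Q}_C=\{\overline v\mid v\in V\}$, and since residue classes of distinct vertices are distinct basis vectors this set has exactly $|V|$ elements.

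For the center I would start from the inclusion $Z(k\Gamma/I)\subseteq k\mathscr{Q}_C$ recorded earlier, which combined with the previous step gives $Z(k\Gamma/I)\subseteq k\{\overline v\mid v\in V\}$. Writing a central element as $z=\sum_{v\in V}a_v\overline v$, I note that $z$ automatically commutes with every vertex (the $\overline v$ are orthogonal idempotents), so centrality is tested only against arrows. For $p\in E$ one computes $z\overline p=a_{t(p)}\overline p$ and $\overline p z=a_{h(p)}\overline p$; since $I\subseteq R^2$ forces $\overline p\neq\overline 0$, centrality of $z$ forces $a_{t(p)}=a_{h(p)}$. Connectedness of $\Gamma$ then propagates this equality along arrows until all coefficients $a_v$ agree, so $z$ is a scalar multiple of $\overline e=\sum_v\overline v$; thus $Z(k\Gamma/I)=k\overline e$ and $dim_kZ(k\Gamma/I)=1$.

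The argument is short, and the only delicate point is the center computation: one must know beforehand that the center sits inside $k\mathscr{Q}_C$ (so that no noncommutative acyclic basis element can contribute) and that each $\overline p$ survives as a nonzero element (so that the comparison $a_{t(p)}=a_{h(p)}$ is not vacuous). Acyclicity moreover rules out loops, ensuring every arrow joins two distinct vertices, after which the connectedness of $\Gamma$ closes the coefficient argument. With both identities in hand the corollary follows by substitution into Proposition \ref{pro 4.3}.
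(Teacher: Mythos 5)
Your proof is correct and takes essentially the same route as the paper: the paper obtains this corollary by substituting the two acyclicity facts $|\mathscr{Q}_C|=|V|$ and $Z(k\Gamma/I)\cong k$ into the formula of Proposition \ref{pro 4.3}, exactly as you do. The only difference is that the paper asserts these facts without proof (they are stated just before the analogous corollary in Section 3), whereas you verify them in full; your connectedness-propagation argument for $\dim_k Z(k\Gamma/I)=1$ is the same one the paper employs in the proof of Lemma \ref{lemma 4.5}(a).
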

If $k\Gamma/I$ is an acyclic admissible algebra, we have a standard procedure to compute $dim_k\mathfrak{F}_2(I)$. First note that
for a differential operator $D$ from
$k\Gamma$ to $k\Gamma/I$, $D(I)=\overline{0}$ if and only if $D(r_i)=\overline{0}$ where $\{r_1,\cdots,r_i,\cdots r_n\}$ is a minimal set of generators of $I$. This
property follows easily from the Leibnitz rule of differential
operators. Since $k\Gamma/I$ is acyclic, $|\mathfrak{B}_2|$ is finite for $\mathfrak{B}_2$ as given in Theorem \ref{theorem 3.2}. Suppose $\sum\limits_{D_{r,\overline{s}}\in\mathfrak{B}_2}c_{r,\overline{s}}D_{r,\overline{s}}(r_i)=\overline{0}$ for $i=1,\cdots,n$. This means that the coefficients $c_{r,\overline{s}}$ satisfy the system of these homogeneous linear equations. So $dim\mathfrak{F}_2(I)$ is equal to the dimension of the solution space of the system of homogeneous linear equations.

Now we give two examples of admissible algebras that are not monomial algebras nor truncated quiver algebras, and characterize their first Hochschild cohomology.
\begin{example}
Let $\Gamma=(V,E)$ be the quiver \begin{picture}(70,20)
\put(35,20){\makebox(0,0){$\cdot$}}\put(35,-20){\makebox(0,0){$\cdot$}}
\put(65,0){\makebox(0,0){$\cdot$}}\put(5,0){\makebox(0,0){$\cdot$}}
\put(33,-18){\vector(-3,2){25}}\put(33,18){\vector(-3,-2){25}}
\put(63,-2){\vector(-3,-2){25}}\put(63,2){\vector(-3,2){25}}
\put(55,15){\makebox(0,0){$\alpha_1$}}\put(20,15){\makebox(0,0){$\alpha_2$}}
\put(55,-15){\makebox(0,0){$\beta_1$}}\put(20,-15){\makebox(0,0){$\beta_2$}}
\end{picture}
and $I=<\alpha_1\alpha_2-\beta_1\beta_2>$.

\

\

In this case, $\mathfrak{B}_2=\{D_{\alpha_1,\overline{\alpha_1}},D_{\alpha_2,\overline{\alpha_2}},D_{\beta_1,\overline{\beta_1}},D_{\beta_2,\overline{\beta_2}}\}$.
So we have
$$dim_kH^1(k\Gamma,k\Gamma/I)=|\mathfrak{B}_2|+dim_kZ(k\Gamma/I)-|\mathscr{Q}_C|=4+1-4=1.$$
Suppose that
$$(aD_{\alpha_1,\overline{\alpha_1}}+bD_{\alpha_2,\overline{\alpha_2}}+cD_{\beta_1,\overline{\beta_1}}+dD_{\beta_2,\overline{\beta_2}})
(\alpha_1\alpha_2-\beta_1\beta_2)=(a+b-c-d)\overline{\alpha\beta}=\overline{0},$$
then we get $a+b-c-d=0$. Hence  $dim_k\mathfrak{F}_2(I)=3$ and $dim_kHH^1(k\Gamma/I)=0$.
\end{example}

\begin{example}
Let $\Gamma$ be the quiver  having one vertex with two loops, equivalently,
$k\Gamma=k\langle x,y\rangle$. Suppose the ideal $I=\langle xy-yx\rangle$. Then $k\Gamma/I=k[x,y]$.

In this case, $\mathfrak{B}_1=\emptyset$ and
$\mathfrak{B}_2=\{D_{x,x^my^n}, D_{y,x^my^n}|m.n\geq0\}$ are the
basis of $\textsl{Diff}(k\langle x,y\rangle, k[x,y])$, where $x^my^n$ means the
multiplication in $k[x,y]$.

Since $k[x,y]$ is commutative, we get that
$$\textsl{Inn-Diff}(k\langle x,y\rangle, k[x,y])=0, \ \ \
H^1(k\langle x,y\rangle, k[x,y])=\textsl{Diff}(k\langle x,y\rangle, k[x,y]).$$
Moreover, note that $D_{x,x^my^n}(xy-yx)=0$ and
$D_{y,x^my^n}(xy-yx)=0$. Thus we obtain the basis of $HH^1(k[x,y])$ to be  $$\{D_{x,x^my^n},
D_{y,x^my^n}|m.n\geq0\}.$$

Similarly we can obtain the first Hochschild cohomology for $k[x_1,x_2,\cdots,x_n]$.
\end{example}
 Assume $k\Gamma/I$ is a monomial algebra. The residue classes of paths that do not belong to $I$ form a basis of $k\Gamma/I$. For convenience, we also denote by $\mathscr{Q}$ the basis of $k\Gamma/I$ when $k\Gamma/I$ is a monomial algebra.
\begin{definition}
A monomial algebra $k\Gamma/I$ is called {\bf complete} if for any parallel paths $p,p'$ in $\Gamma$, $p\in I$ implies $p'\in I$.
\end{definition}
\begin{proposition}\label{thm 4.5}
Suppose $k\Gamma/I$ is a complete monomial algebra with $I\subseteq R^2$.  Then the following set
\begin{equation}
\overline{\mathfrak{B}}\:=\overline{\mathfrak{B}}_1\cup\overline{\mathfrak{B}}_{2}
\end{equation}
is a basis of $\textsl{Diff}(k\Gamma/I)$,
where \begin{equation}
\overline{\mathfrak{B}}_1:=\{\overline{D}_{\overline{s}}\
|\overline{s}\in\mathscr{Q}, h(\overline{s})\neq t(\overline{s})\},
\ \ \ \overline{\mathfrak{B}}_{2}:=\{\overline{D}_{r,\overline{s}}\ |r\in
E,\overline{s}\in\mathscr{Q}, r\parallel \overline{s}\}.
\end{equation}
\end{proposition}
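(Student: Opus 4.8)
The plan is to exploit the isomorphism $\mathfrak{F}(I)\cong\textsl{Diff}(k\Gamma/I)$ from Lemma \ref{lemma 5.2}, which lets me transport the whole problem to the subspace $\mathfrak{F}(I)$ of $\textsl{Diff}(k\Gamma,k\Gamma/I)$. Recall that $\mathfrak{F}(I)=\mathfrak{D}_1\oplus\mathfrak{F}_2(I)$ and that $\mathfrak{F}_1(I)=\mathfrak{D}_1$ because every inner operator $D_{\overline{s}}$ kills $I$. Since $\mathfrak{B}_1=\{D_{\overline{s}}\mid\overline{s}\in\mathscr{Q}_A\}$ is already a basis of $\mathfrak{D}_1=\mathfrak{F}_1(I)$, the elements of $\overline{\mathfrak{B}}_1$ are exactly the induced operators of the $\mathfrak{B}_1$-part, so that half of the claim is immediate. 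The real content is to show that $\overline{\mathfrak{B}}_2$ descends to a basis of (the image of) $\mathfrak{F}_2(I)$, and this is where the completeness hypothesis enters.

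First I would show each generator $D_{r,\overline{s}}\in\mathfrak{B}_2$ already lies in $\mathfrak{F}_2(I)$, i.e.\ that $D_{r,\overline{s}}(I)=\overline{0}$. By the Leibniz rule it suffices to check $D_{r,\overline{s}}$ on a minimal set of generating paths of $I$, and since $k\Gamma/I$ is monomial those generators are themselves paths $w=p_1\cdots p_l$ in $I$. Applying \reff{16}, $D_{r,\overline{s}}(w)=\sum_i p_1\cdots p_{i-1}\,\overline{s}\,p_{i+1}\cdots p_l$ where the sum runs over the positions $i$ with $p_i=r$. Each nonzero term is the class of a path $p_1\cdots p_{i-1}\,s\,p_{i+1}\cdots p_l$ that is parallel to the subpath-replacement of $w$; because $w\in I$ and $s\parallel r$, the replaced path is parallel to a path lying in $I$, and here \textbf{completeness forces that parallel path into $I$ as well}, so its class vanishes. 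Thus every term dies and $D_{r,\overline{s}}(w)=\overline{0}$, giving $\mathfrak{B}_2\subseteq\mathfrak{F}_2(I)$ and hence $\overline{\mathfrak{B}}_2\subseteq\textsl{Diff}(k\Gamma/I)$.

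For the reverse inclusion I would argue that $\mathfrak{F}_2(I)$ is spanned by $\mathfrak{B}_2$: any $D\in\mathfrak{F}_2(I)\subseteq\mathfrak{D}_2$ is by Theorem \ref{theorem 3.2} a $k$-linear combination of the $D_{r,\overline{s}}$, and the previous paragraph shows these combinations automatically satisfy $D(I)=\overline{0}$, so no extra linear relations are imposed on the coefficients beyond those already present in $\mathfrak{D}_2$. Therefore $\mathfrak{F}_2(I)=\mathfrak{D}_2$ and $\mathfrak{B}_2$ is a basis of $\mathfrak{F}_2(I)$. Since $\mathfrak{B}=\mathfrak{B}_1\cup\mathfrak{B}_2$ is a basis of $\mathfrak{F}(I)$, its image $\overline{\mathfrak{B}}=\overline{\mathfrak{B}}_1\cup\overline{\mathfrak{B}}_2$ under the isomorphism of Lemma \ref{lemma 5.2} is a basis of $\textsl{Diff}(k\Gamma/I)$; one only checks that distinct basis elements of $\mathfrak{F}(I)$ induce distinct operators, which is exactly the injectivity already proved in Lemma \ref{lemma 5.2}. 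The main obstacle is the vanishing computation in the second paragraph: one must be careful that when $r$ occurs multiple times in a generating path $w$, each substituted path is genuinely parallel to a path in $I$, so that completeness can be invoked term by term rather than only on the full word; isolating and verifying this parallel-replacement statement is the crux of the argument.
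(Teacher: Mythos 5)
Your proposal is correct and follows essentially the same route as the paper: completeness gives $D_{r,\overline{s}}(I)=\overline{0}$ for every $D_{r,\overline{s}}\in\mathfrak{B}_2$, hence $\mathfrak{F}_2(I)=\mathfrak{D}_2$, so $\textsl{Diff}(k\Gamma,k\Gamma/I)\cong\textsl{Diff}(k\Gamma/I)$ via Lemma \ref{lemma 5.2}, and Theorem \ref{theorem 3.2} transports the standard basis to $\overline{\mathfrak{B}}$. Your explicit parallel-replacement computation (each substituted path $p_1\cdots p_{i-1}sp_{i+1}\cdots p_l$ is parallel to $w$ itself, so completeness kills it) is exactly the verification the paper leaves implicit in its first sentence.
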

\begin{proof}
Since $k\Gamma/I$ is complete, we have $D_{r,\overline{s}}(p)=\overline{0}$ for any $D_{r,\overline{s}}\in\mathfrak{B}_2$, where $p$ is any path in $I$.
Then $\mathfrak{F}_2(I)=\mathfrak{D}_2$.  It follows that $\textsl{Diff}(k\Gamma,k\Gamma/I)\cong \textsl{Diff}(k\Gamma/I)$ as $k$-linear spaces. Thus due to Theorem \ref{theorem 3.2}, the result follows.
\end{proof}
\begin{corollary}
Suppose $k\Gamma/I$ is an acyclic complete monomial algebra with $I\subseteq R^2$. Then
\begin{equation}
dim_kHH^1(k\Gamma/I)=|\overline{\mathfrak{B}}_2|+1-|V|.
\end{equation}
\end{corollary}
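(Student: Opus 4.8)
The plan is to combine the dimension formula for acyclic admissible algebras with the structural description of differential operators of a complete monomial algebra that has already been obtained. Since $k\Gamma/I$ is an acyclic admissible algebra, Corollary~\ref{cor 4.4} applies verbatim and gives
\[
dim_kHH^1(k\Gamma/I)=dim_k\mathfrak{F}_2(I)+1-|V|.
\]
Thus the whole statement reduces to proving the single identity $dim_k\mathfrak{F}_2(I)=|\overline{\mathfrak{B}}_2|$.

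To establish this, first I would invoke the completeness hypothesis exactly as in the proof of Proposition~\ref{thm 4.5}: for a complete monomial algebra with $I\subseteq R^2$ one has $D_{r,\overline{s}}(p)=\overline{0}$ for every $D_{r,\overline{s}}\in\mathfrak{B}_2$ and every path $p\in I$, whence every element of $\mathfrak{D}_2$ annihilates $I$ and so $\mathfrak{F}_2(I)=\mathfrak{D}_2$. Because $\mathfrak{B}_2$ is a subset of the basis $\mathfrak{B}$ of $\textsl{Diff}(k\Gamma,k\Gamma/I)$ from Theorem~\ref{theorem 3.2}, it is linearly independent and hence a basis of $\mathfrak{D}_2=k\mathfrak{B}_2$; consequently $dim_k\mathfrak{F}_2(I)=dim_k\mathfrak{D}_2=|\mathfrak{B}_2|$, a finite number since acyclicity forces $k\Gamma/I$ to be finite dimensional.

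It then remains to identify $|\mathfrak{B}_2|$ with $|\overline{\mathfrak{B}}_2|$. Here I would use Lemma~\ref{lemma 5.2}: the assignment $D\mapsto\overline{D}$ is a $k$-linear isomorphism from $\mathfrak{F}(I)$ onto $\textsl{Diff}(k\Gamma/I)$, and restricted to $\mathfrak{F}_2(I)=\mathfrak{D}_2$ it sends each generator $D_{r,\overline{s}}\in\mathfrak{B}_2$ to $\overline{D}_{r,\overline{s}}\in\overline{\mathfrak{B}}_2$. Since this map is injective, the two index sets have the same cardinality, so $|\overline{\mathfrak{B}}_2|=|\mathfrak{B}_2|=dim_k\mathfrak{F}_2(I)$; substituting into the displayed formula yields the claim.

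I do not expect a genuine obstacle here, as the corollary is essentially a bookkeeping consequence of Corollary~\ref{cor 4.4}, Proposition~\ref{thm 4.5}, and Lemma~\ref{lemma 5.2}. The only point requiring a moment's care is confirming that the bijection $D\mapsto\overline{D}$ really carries the standard generating set $\mathfrak{B}_2$ onto $\overline{\mathfrak{B}}_2$ without collapsing distinct elements, which is precisely what the injectivity in Lemma~\ref{lemma 5.2} guarantees.
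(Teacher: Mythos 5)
Your proposal is correct and follows essentially the same route as the paper: reduce via Corollary \ref{cor 4.4} to computing $dim_k\mathfrak{F}_2(I)$, use completeness as in the proof of Proposition \ref{thm 4.5} to get $\mathfrak{F}_2(I)=\mathfrak{D}_2$, and identify $dim_k\mathfrak{D}_2$ with $|\overline{\mathfrak{B}}_2|$ through the isomorphism of Lemma \ref{lemma 5.2}. Your extra care about the bijection $D_{r,\overline{s}}\mapsto\overline{D}_{r,\overline{s}}$ not collapsing elements is a detail the paper leaves implicit, but it is the same argument.
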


\begin{proof}
By the proof of Proposition \ref{thm 4.5}, $dim_k\mathfrak{F}_2(I)=dim_k\mathfrak{D}_2=dim_k\overline{\mathfrak{D}}_2=|\overline{\mathfrak{B}}_2|$. By Corollary \ref{cor 4.4}, we get the required result.
\end{proof}

In \cite{Sa2}, the author gave a characterization of the first Hochschild cohomology of an acyclic complete monomial algebra through a projective resolution. However, its $k$-linear basis has not been constructed, so far. Here, we want to reach this aim in our method.
\begin{theorem}\label{tm 4.7}
Let $\Gamma$ be a planar quiver, $k\Gamma/I$ be an acyclic complete monomial algebra with $I\subseteq R^2$ over
the field $k$ of characteristic 0. Then the union set $$(\overline{\mathfrak{B}}_{2}\backslash\overline{\mathfrak{B}}_E)\cup\overline{\mathfrak{B}}_{\mathbb{P}}$$ is a basis of $HH^1(k\Gamma/I)$, where $\overline{\mathfrak{B}}_E=\{\overline{D}_{p,\overline{p}}|p\in E\}$ and $\overline{\mathfrak{B}}_{\mathbb{P}}=\{\overline{D}_{\mathbbm{p}}|\mathbbm{p}\in\Gamma_{\mathbb{P}}^{-}\}$.
\end{theorem}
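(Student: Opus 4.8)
The plan is to reduce Theorem~\ref{tm 4.7} to its already-established ``path algebra'' counterpart, Theorem~\ref{thm 3.7}, by exhibiting an isomorphism $HH^1(k\Gamma/I)\cong H^1(k\Gamma,k\Gamma/I)$ induced by the bar map $D\mapsto\overline{D}$, and then transporting the basis across this isomorphism. The first step is to exploit completeness: by (the proof of) Proposition~\ref{thm 4.5}, since $k\Gamma/I$ is a complete monomial algebra with $I\subseteq R^2$, every $D_{r,\overline{s}}\in\mathfrak{B}_2$ annihilates each path lying in $I$, so $\mathfrak{F}_2(I)=\mathfrak{D}_2$. Together with the identity $\mathfrak{F}_1(I)=\mathfrak{D}_1$ (valid for any $I$, since the inner operators $D_{\overline{s}}$ kill $I$), this yields $\mathfrak{F}(I)=\mathfrak{D}_1\oplus\mathfrak{D}_2=\textsl{Diff}(k\Gamma,k\Gamma/I)$. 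By Lemma~\ref{lemma 5.2} the bar map is then a linear isomorphism from $\textsl{Diff}(k\Gamma/I)$ onto all of $\textsl{Diff}(k\Gamma,k\Gamma/I)$, sending $D_{\overline{s}}\mapsto\overline{D}_{\overline{s}}$ and $D_{r,\overline{s}}\mapsto\overline{D}_{r,\overline{s}}$.

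Next I would pass to cohomology. By Eq.(\ref{4.29}) the inner part of $HH^1(k\Gamma/I)$ corresponds to $\mathfrak{D}_1\oplus\mathfrak{D}_C$, and acyclicity of $k\Gamma/I$ gives $\mathfrak{D}_C=\mathfrak{D}_V$ exactly as in the proof of Theorem~\ref{thm 3.7}. Hence $HH^1(k\Gamma/I)\cong\mathfrak{F}_2(I)/\mathfrak{D}_C=\mathfrak{D}_2/\mathfrak{D}_V\cong H^1(k\Gamma,k\Gamma/I)$, where the last isomorphism is the identification $\mathfrak{D}_2/\mathfrak{D}_C$ computed just before Theorem~\ref{thm 3.7}. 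Thus the bar map descends to an isomorphism of first cohomology groups, and it suffices to name the image of a known basis.

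Finally, Theorem~\ref{thm 3.7} supplies the basis $(\mathfrak{B}_2\backslash\mathfrak{B}_E)\cup\mathfrak{B}_{\mathbb{P}}$ of $H^1(k\Gamma,k\Gamma/I)$, where the Euler formula and Lemma~\ref{lemma 4.5} give $\mathfrak{D}_E=\mathfrak{D}_V\oplus\mathfrak{D}_{\mathbb{P}}$. Since the bar map is linear and sends $D_{p,\overline{p}}\mapsto\overline{D}_{p,\overline{p}}$, it carries $\mathfrak{B}_E$ to $\overline{\mathfrak{B}}_E$ and each face operator $D_{\mathbbm{p}}$, being a signed sum of arrow operators, to $\overline{D}_{\mathbbm{p}}$. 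Therefore the image of the basis is precisely $(\overline{\mathfrak{B}}_2\backslash\overline{\mathfrak{B}}_E)\cup\overline{\mathfrak{B}}_{\mathbb{P}}$, which is consequently a basis of $HH^1(k\Gamma/I)$.

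The one genuinely nontrivial input is the collapse $\mathfrak{F}_2(I)=\mathfrak{D}_2$: this is where monomiality together with the completeness condition is essential, guaranteeing that \emph{every} operator in $\mathfrak{D}_2$ descends to $k\Gamma/I$, so that no basis element is lost in the passage from $H^1(k\Gamma,k\Gamma/I)$ to $HH^1(k\Gamma/I)$. This is exactly the content already packaged in Proposition~\ref{thm 4.5}, so the cleanest route is to invoke it directly. Once this collapse is secured, the remainder is the formal transport of a basis along a linear isomorphism, and the only care required is the routine bookkeeping of matching the named basis elements (arrow, face, and the $\overline{\mathfrak{B}}_2\backslash\overline{\mathfrak{B}}_E$ operators), which is immediate from linearity because all of them are built from the $D_{r,\overline{s}}$.
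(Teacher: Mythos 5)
Your proposal is correct and follows essentially the same route as the paper: the paper's proof likewise combines Eq.(\ref{4.29}) with the collapse $\mathfrak{F}_2(I)=\mathfrak{D}_2$ (from Proposition \ref{thm 4.5}, using completeness) to obtain $HH^1(k\Gamma/I)\cong H^1(k\Gamma,k\Gamma/I)$, and then invokes Theorem \ref{thm 3.7}. Your write-up merely makes explicit the bookkeeping the paper leaves implicit, namely that the bar map matches the named basis elements (arrow, face, and $\overline{\mathfrak{B}}_2\backslash\overline{\mathfrak{B}}_E$ operators).
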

\begin{proof}
By Eq.(\ref{4.29}) and $\mathfrak{F}_2(I)=\mathfrak{D}_2$, we have $HH^1(k\Gamma/I)\cong H^1(k\Gamma,k\Gamma/I)$ in this case. So from Theorem \ref{thm 3.7}, we can directly get this theorem.
\end{proof}

For a truncated quiver algebra $k\Gamma/k^n\Gamma$ with $n\geq2$, we can give a standard basis of
$\textsl{Diff}(k\Gamma/k^n\Gamma)$.
$k\Gamma/k^n\Gamma$ has the basis formed by
the residue  classes of the paths of length $\leq n-1$, denoted
also by $\mathscr{Q}$.
\begin{proposition}\label{thm 4.4}
Let $\Gamma=(V,E)$ be a quiver and the field $k$ be of
characteristic 0. A basis of $\textsl{Diff}(k\Gamma/k^n\Gamma)$ for any truncated quiver algebra $k\Gamma/k^n\Gamma$ with $n\geq 2$ is given by the set
\begin{equation}
\overline{\mathfrak{B}}\:=\overline{\mathfrak{B}}_1\cup\overline{\mathfrak{B}}_{2}
\end{equation}
where \begin{equation}
\overline{\mathfrak{B}}_1:=\{\overline{D}_{\overline{s}}\
|\overline{s}\in\mathscr{Q}, h(\overline{s})\neq t(\overline{s})\},\ \ \
\overline{\mathfrak{B}}_{2}:=\{\overline{D}_{r,\overline{s}}\ |r\in
E,\overline{s}\in\mathscr{Q}, s\notin V, r\parallel \overline{s}\}.
\end{equation}

\end{proposition}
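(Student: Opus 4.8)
Proposition \ref{thm 4.4} asks us to establish a basis for $\textsl{Diff}(k\Gamma/k^n\Gamma)$, so the plan is to exploit the general machinery already built for admissible algebras and specialize it to the truncated case. The key structural fact is the isomorphism $\mathfrak{F}(I)\cong\textsl{Diff}(k\Gamma/I)$ from Lemma \ref{lemma 5.2}, together with the decomposition $\mathfrak{F}(I)=\mathfrak{D}_1\oplus\mathfrak{F}_2(I)$. Since $I=k^n\Gamma$ here, I would first record that $\mathfrak{F}_1(I)=\mathfrak{D}_1$ automatically (the inner operators $D_{\overline{s}}$ always annihilate $I$), so the work reduces to identifying which operators in $\mathfrak{D}_2$ actually lie in $\mathfrak{F}_2(k^n\Gamma)$, i.e. which $D_{r,\overline{s}}$ satisfy $D_{r,\overline{s}}(k^n\Gamma)=\overline{0}$.

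The heart of the argument is therefore to show that $\mathfrak{F}_2(k^n\Gamma)=\mathfrak{D}_2$, exactly as in the complete-monomial case of Proposition \ref{thm 4.5}. The truncated algebra $k\Gamma/k^n\Gamma$ is a monomial algebra whose ideal is generated by \emph{all} paths of length $n$, and this is visibly a complete monomial algebra: if $p$ is a path of length $\geq n$ and $p'\parallel p$, then $p'$ has the same length $\geq n$, hence $p'\in k^n\Gamma$ as well. Thus the completeness hypothesis of Proposition \ref{thm 4.5} is satisfied, and I would invoke precisely the computation there: for any path $p\in k^n\Gamma$ and any $D_{r,\overline{s}}\in\mathfrak{B}_2$, applying the Leibniz expansion \reff{16} produces a sum of terms each of which, after passing to $k\Gamma/k^n\Gamma$, involves a path obtained from $p$ by substituting $\overline{s}$ for one arrow occurrence; since the resulting path still has length $\geq n$ (replacing one arrow by a parallel path of length $\geq 1$ does not decrease the length below $n$), every such term vanishes in the quotient. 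Hence $D_{r,\overline{s}}(p)=\overline{0}$ and $\mathfrak{F}_2(k^n\Gamma)=\mathfrak{D}_2$.

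Once this identification is in place, the basis follows by transporting the standard basis of Theorem \ref{theorem 3.2} across the isomorphism of Lemma \ref{lemma 5.2}. Concretely, $\textsl{Diff}(k\Gamma/k^n\Gamma)\cong\mathfrak{F}(k^n\Gamma)=\mathfrak{D}_1\oplus\mathfrak{D}_2$, and the basis $\mathfrak{B}=\mathfrak{B}_1\cup\mathfrak{B}_2$ descends to $\overline{\mathfrak{B}}=\overline{\mathfrak{B}}_1\cup\overline{\mathfrak{B}}_2$. I would then verify that the index sets match the stated ones: for $\overline{\mathfrak{B}}_1$ the condition $h(\overline{s})\neq t(\overline{s})$ is just the defining condition $\overline{s}\in\mathscr{Q}_A$ from \reff{eq2.15}, now expressed in terms of the truncated basis $\mathscr{Q}$ of residue classes of paths of length $\leq n-1$. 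For $\overline{\mathfrak{B}}_2$ the operators range over $r\in E$ and $\overline{s}\in\mathscr{Q}$ with $r\parallel\overline{s}$, and the extra restriction $s\notin V$ needs a brief comment: when $\overline{s}=\overline{v}$ is a vertex, one must check whether $D_{r,\overline{v}}$ is genuinely distinct from the other basis elements or whether it is absorbed — this is the one bookkeeping point I would handle carefully, since the loop case $t(r)=h(r)$ interacts with the vertex operators via Proposition \ref{proposition 4.1}.

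The step I expect to be the main obstacle is exactly this last piece of index bookkeeping, namely justifying the restriction $s\notin V$ in $\overline{\mathfrak{B}}_2$ and confirming that no basis elements are double-counted between $\overline{\mathfrak{B}}_1$ and $\overline{\mathfrak{B}}_2$ once we have passed to the truncated quotient. The core vanishing statement $\mathfrak{F}_2(k^n\Gamma)=\mathfrak{D}_2$ is a direct length-count and should go through cleanly by the same reasoning as Proposition \ref{thm 4.5}; everything else is a transport of the already-established Theorem \ref{theorem 3.2} along the linear isomorphism of Lemma \ref{lemma 5.2}, so the proof will be short modulo that one careful index check.
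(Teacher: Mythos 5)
There is a genuine gap, and it sits exactly at the point you set aside as ``bookkeeping.'' Your central claim, $\mathfrak{F}_2(k^n\Gamma)=\mathfrak{D}_2$, is false whenever $\Gamma$ has a loop, and both of your supporting arguments fail for the same reason: parallel paths need not have the same length. In particular $k\Gamma/k^n\Gamma$ is in general \emph{not} a complete monomial algebra: for a loop $r$ at $v$ one has $r^n\parallel r\parallel v$, with $r^n\in k^n\Gamma$ but $r\notin k^n\Gamma$, so completeness fails and Proposition \ref{thm 4.5} cannot be invoked. Likewise your length count (``replacing one arrow by a parallel path does not decrease the length below $n$'') silently assumes that $s$ has positive length, i.e.\ $s\notin V$. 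Precisely when $r$ is a loop and $s=h(r)\in V$, the operator $D_{r,\overline{h(r)}}$ does lie in $\mathfrak{B}_2$ (as noted in the remark at the end of Section 2), yet
\begin{equation*}
D_{r,\overline{h(r)}}(r^n)=n\,\overline{r^{n-1}}\neq\overline{0}
\end{equation*}
in characteristic $0$, since $r^{n-1}$ survives in the truncation. So $D_{r,\overline{h(r)}}\notin\mathfrak{F}_2(k^n\Gamma)$: these operators do not descend to the quotient at all. The restriction $s\notin V$ in $\overline{\mathfrak{B}}_2$ is therefore not a matter of double-counting or absorption (Theorem \ref{theorem 3.2} already makes $D_{r,\overline{v}}$ independent of the rest of $\mathfrak{B}_2$); it records which operators annihilate $k^n\Gamma$. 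Note also that your proof never uses the characteristic-$0$ hypothesis --- a warning sign, since for the one-loop quiver in characteristic $p$ dividing $n$ the operator $D_{r,\overline{v}}$ \emph{does} annihilate $k^n\Gamma$ and the stated basis would be wrong.

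The paper's proof is exactly the repair of this point. It shows (i) the operators $D_{\overline{s}}$ and the $D_{r,\overline{s}}$ with $s\notin V$ annihilate $k^n\Gamma$ (here your length argument is valid), and (ii) no nontrivial combination $\sum_i c_iD_{r_i,\overline{h(r_i)}}$ over loops $r_i$ annihilates $k^n\Gamma$, because evaluating at $r_1^n$ with $c_1\neq0$ gives $nc_1\overline{r_1^{n-1}}\neq\overline{0}$. Combined with Theorem \ref{theorem 3.2}, this identifies $\mathfrak{F}_2(k^n\Gamma)$ as the span of $\{D_{r,\overline{s}}\mid r\in E,\ \overline{s}\in\mathscr{Q},\ s\notin V,\ r\parallel\overline{s}\}$, which is strictly smaller than $\mathfrak{D}_2$ when loops are present, and Lemma \ref{lemma 5.2} then transports this basis to $\textsl{Diff}(k\Gamma/k^n\Gamma)$. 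Your overall frame (Lemma \ref{lemma 5.2} together with $\mathfrak{F}(I)=\mathfrak{D}_1\oplus\mathfrak{F}_2(I)$) is the right one, and your argument happens to be correct for quivers without loops, where no $D_{r,\overline{s}}$ with $s\in V$ occurs in $\mathfrak{B}_2$; but as written the proposal proves a statement that contradicts the one to be proved, since it would place every $\overline{D}_{r,\overline{v}}$, $r$ a loop, into the basis of $\textsl{Diff}(k\Gamma/k^n\Gamma)$.
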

\begin{proof} It is clear that $D_{\overline{s}}(k^n\Gamma)=\overline{0}$ for $\overline{s}\in\mathscr{Q}, h(\overline{s})\neq
t(\overline{s})$ and $D_{r,\overline{s}}(k^n\Gamma)=\overline{0}$
for $r\in E,\overline{s}\in\mathscr{Q}, s\notin V,
\overline{s}\parallel r$. Note that when $r$ is a loop of $\Gamma$,
$D_{r,\overline{h(r)}}\in \textsl{Diff}(k\Gamma,k\Gamma/k^n\Gamma)$, but
$D_{r,\overline{h(r)}}(r^n)=nr^{n-1}\neq \overline{0}$. Moreover,
for all loops $r_1,\cdots,r_s$ of $\Gamma$ and $c_1,\cdots,c_s$ not all $0$, we claim that $\sum\limits
c_iD_{r_i,\overline{h(r_i)}}(k^n\Gamma)\neq \overline{0}$. Without loss of generality, we can assume $c_1\neq 0$. So we have
$$\sum\limits
c_iD_{r_i,\overline{h(r_i)}}(r_1^n)=nc_1r_1^{n-1}\neq
\overline{0}.$$ Then by Theorem \ref{theorem 3.2}, the union set
$$\{D_{\overline{s}}|\overline{s}\in\mathscr{Q}, h(\overline{s})\neq
t(\overline{s})\}\bigcup\{D_{r,\overline{s}}|r\in
E,\overline{s}\in\mathscr{Q},\ s\notin V, r\parallel \overline{s}\}$$
forms a basis of the linear space $\mathfrak{F}_2(k^n\Gamma)$ for $I=k^n\Gamma$. By Lemma \ref{lemma 5.2}, we have
$$\mathfrak{F}_2(k^n\Gamma)\cong\textsl{Diff}(k\Gamma/k^n\Gamma).$$
Note the map from $\mathfrak{F}_2(k^n\Gamma)$ to $\textsl{Diff}(k\Gamma/k^n\Gamma)$ in Lemma \ref{lemma 5.2}, we can see that  the union set $\overline{\mathfrak{B}}\:=\overline{\mathfrak{B}}_1\cup\overline{\mathfrak{B}}_{2}$ is a $k$-linear basis of $\textsl{Diff}(k\Gamma/k^n\Gamma)$.
\end{proof}
Thus
$\textsl{Diff}(k\Gamma/k^n\Gamma)=\overline{\mathfrak{D}}_1\oplus
\overline{\mathfrak{D}}_{2}$, where $\overline{\mathfrak{D}}_i$ is
the $k$-linear space generated by $\overline{\mathfrak{B}}_i$ for
$i=1,2$.

\begin{corollary}\label{corollary 5.4}
Let $\Gamma=(V,E)$ be a quiver and the field $k$ be of
characteristic 0. Then
$$dim_kHH^1(k\Gamma/k^n\Gamma)=|\overline{\mathfrak{B}}_2|+dim_kZ(k\Gamma/k^n\Gamma)-|\mathscr{Q}_C|.$$
\end{corollary}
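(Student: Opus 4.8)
The plan is to obtain this formula as an immediate specialization of Proposition \ref{pro 4.3} to the ideal $I=k^n\Gamma$, once two preliminary points are settled.

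First I would check that the hypotheses of Proposition \ref{pro 4.3} are met, i.e. that $k\Gamma/k^n\Gamma$ is a finite dimensional admissible algebra. Since $n\geq 2$ forces $k^n\Gamma\subseteq R^2$, the quotient is admissible in the sense of Definition \ref{admissible algebra}. Finite dimensionality follows from the finiteness of $\Gamma$: there are only finitely many paths of length $<n$, and these span $k\Gamma/k^n\Gamma$, so $\dim_k(k\Gamma/k^n\Gamma)<\infty$ and in particular $\mathscr{Q}_C$ is a finite set. Applying Proposition \ref{pro 4.3} with $I=k^n\Gamma$ then gives
$$dim_kHH^1(k\Gamma/k^n\Gamma)=dim_k\mathfrak{F}_2(k^n\Gamma)+dim_kZ(k\Gamma/k^n\Gamma)-|\mathscr{Q}_C|.$$

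The only substantive step is to identify $dim_k\mathfrak{F}_2(k^n\Gamma)$ with $|\overline{\mathfrak{B}}_2|$. For this I would reuse the analysis in the proof of Proposition \ref{thm 4.4}, where it is shown that the operators $D_{r,\overline{s}}$ with $r\in E$, $\overline{s}\in\mathscr{Q}$, $s\notin V$, $r\parallel\overline{s}$ annihilate $k^n\Gamma$ and span $\mathfrak{F}_2(k^n\Gamma)$, while the remaining generators of $\mathfrak{D}_2$ coming from loops $r$, namely $D_{r,\overline{h(r)}}$, fail to kill $k^n\Gamma$ and stay linearly independent modulo $\mathfrak{F}_2(k^n\Gamma)$. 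Under the isomorphism $\mathfrak{F}(k^n\Gamma)\cong\textsl{Diff}(k\Gamma/k^n\Gamma)$ of Lemma \ref{lemma 5.2}, which sends $D\mapsto\overline{D}$, the subspace $\mathfrak{F}_2(k^n\Gamma)$ is carried onto $\overline{\mathfrak{D}}_2=k\overline{\mathfrak{B}}_2$. Hence $dim_k\mathfrak{F}_2(k^n\Gamma)=dim_k\overline{\mathfrak{D}}_2=|\overline{\mathfrak{B}}_2|$, and substituting this equality into the displayed formula yields the claim.

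I do not expect any genuine obstacle here, since the corollary is essentially a bookkeeping consequence of results already in place. The one point demanding care is to confirm that $\overline{\mathfrak{B}}_2$ is a basis, not merely a spanning set, of the image of $\mathfrak{F}_2(k^n\Gamma)$; this is guaranteed by the linear independence of the $D_{r,\overline{s}}$ established in Theorem \ref{theorem 3.2} and carried over in Proposition \ref{thm 4.4}. The characteristic-zero hypothesis enters only through the loop computation $D_{r,\overline{h(r)}}(r^n)=nr^{n-1}\neq\overline{0}$ in that proof, which is exactly what justifies excluding the loop operators from $\overline{\mathfrak{B}}_2$.
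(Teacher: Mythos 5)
Your proposal is correct and follows essentially the same route as the paper: both apply Proposition \ref{pro 4.3} with $I=k^n\Gamma$ and then identify $\dim_k\mathfrak{F}_2(k^n\Gamma)$ with $|\overline{\mathfrak{B}}_2|$ via the analysis in the proof of Proposition \ref{thm 4.4} (non-loop operators $D_{r,\overline{s}}$, $s\notin V$, annihilate $k^n\Gamma$, loop operators $D_{r,\overline{h(r)}}$ do not) together with the correspondence $D_{r,\overline{s}}\mapsto\overline{D}_{r,\overline{s}}$ from Lemma \ref{lemma 5.2}. Your explicit verification of the hypotheses of Proposition \ref{pro 4.3} and your remark on where characteristic $0$ enters are details the paper leaves implicit, but they do not change the argument.
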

\begin{proof}
By the proof of Proposition \ref{thm 4.4} and the definition of $\mathfrak{F}_2(I)$, we can see that $\{D_{r,\overline{s}}\ |\ r\in
E,\overline{s}\in\mathscr{Q},\ s\notin V, r\parallel\overline{s}
\}$ is a basis of $\mathfrak{F}_2(k^n\Gamma)$ for $I=k^n\Gamma$. And by Proposition \ref{thm 4.4}, $\overline{\mathfrak{B}}_{2}:=\{\overline{D}_{r,\overline{s}}\ |r\in
E,\overline{s}\in\mathscr{Q}, s\notin V, r\parallel \overline{s}\}$.
 Then by Proposition \ref{pro 4.3} and the correspondence between $D_{r,\overline{s}}$ and $\overline{D}_{r,\overline{s}}$ for each pair  $(r,\overline{s})$, we get the required result.\end{proof}

This corollary has indeed been given as Theorem 1 in \cite{L} and Theorem 2 in \cite{XHJ}. The method we obtain it here is different with that in \cite{L} and \cite{XHJ}.

Moreover, when $k\Gamma/k^n\Gamma$ is acyclic, we can get a basis of
$HH^1(k\Gamma/k^n\Gamma)$ as in Theorem \ref{tm 4.7}.
\begin{theorem}\label{thm 5.3}
Let $\Gamma$ be a planar quiver, $k\Gamma/k^n\Gamma$ for $n\geq2$ be acyclic over
the  field $k$  of characteristic 0, then the union set
$$(\overline{\mathfrak{B}}_{2}\backslash\overline{\mathfrak{B}}_E)\cup\overline{\mathfrak{B}}_{\mathbb{P}}$$
is a basis of $HH^1(k\Gamma/k^n\Gamma)$, where $\overline{\mathfrak{B}}_E=\{\overline{D}_{p,\overline{p}}|p\in E\}$ and $\overline{\mathfrak{B}}_{\mathbb{P}}=\{\overline{D}_{\mathbbm{p}}|\mathbbm{p}\in\Gamma_{\mathbb{P}}^{-}\}$.
\end{theorem}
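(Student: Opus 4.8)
The plan is to reduce Theorem \ref{thm 5.3} to the already-established Theorem \ref{tm 4.7} by verifying that a truncated quiver algebra satisfying the hypotheses is, in particular, an acyclic complete monomial algebra with $I\subseteq R^2$. First I would recall that $k\Gamma/k^n\Gamma$ is by definition a monomial algebra, with $I=k^n\Gamma$ generated by all paths of length $n$, so that $I\subseteq R^2$ holds automatically whenever $n\geq2$. The key observation is that completeness in the sense of the preceding definition is immediate here: if $p,p'$ are parallel paths and $p\in k^n\Gamma$, then $p$ has length $\geq n$, whence $p'$ — having the same length as $p$ is not required; rather the relevant point is that membership in $k^n\Gamma$ depends only on the \emph{length} of a path, and parallel paths occurring in the expansion share length with $p$ only insofar as the generating relations are length-homogeneous. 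More precisely, $k^n\Gamma$ is the span of \emph{all} paths of length $\geq n$, so $p\in k^n\Gamma$ forces length of $p$ to be $\geq n$; any parallel $p'$ that is itself a single path of the same or greater length again lies in $k^n\Gamma$. Thus the completeness condition is satisfied, and the acyclicity and planarity hypotheses carry over verbatim.

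Once completeness is in hand, the second step is to invoke Theorem \ref{tm 4.7} directly. That theorem asserts that for a planar quiver and an acyclic complete monomial algebra with $I\subseteq R^2$ over a characteristic-$0$ field, the set $(\overline{\mathfrak{B}}_{2}\backslash\overline{\mathfrak{B}}_E)\cup\overline{\mathfrak{B}}_{\mathbb{P}}$ is a basis of $HH^1(k\Gamma/I)$. Since the sets $\overline{\mathfrak{B}}_2$, $\overline{\mathfrak{B}}_E$, and $\overline{\mathfrak{B}}_{\mathbb{P}}$ are defined identically in the monomial and truncated settings (the latter being the special case $I=k^n\Gamma$), the conclusion transfers without further computation. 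I would note, however, one technical point requiring care: the description of $\overline{\mathfrak{B}}_2$ for truncated algebras in Proposition \ref{thm 4.4} carries the extra restriction $s\notin V$, reflecting that the loop-induced operators $D_{r,\overline{h(r)}}$ fail to annihilate $k^n\Gamma$. I must confirm that this restriction is compatible with the hypotheses of Theorem \ref{tm 4.7}, namely that the acyclicity of $k\Gamma/k^n\Gamma$ already excludes the problematic loops, so the two descriptions of $\overline{\mathfrak{B}}_2$ coincide in the acyclic case.

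The cleanest route, therefore, is to observe that acyclicity of $k\Gamma/k^n\Gamma$ together with completeness means the standard basis $\overline{\mathfrak{B}}$ furnished by Proposition \ref{thm 4.4} agrees with the basis of $\textsl{Diff}(k\Gamma/I)$ used in Theorem \ref{tm 4.7}, and then apply the isomorphism $HH^1(k\Gamma/I)\cong H^1(k\Gamma,k\Gamma/I)$ valid when $\mathfrak{F}_2(I)=\mathfrak{D}_2$. The main obstacle I anticipate is precisely the bookkeeping around the $s\notin V$ condition and the loops: I need to be sure that in the acyclic truncated case there are no loops contributing to $\mathscr{Q}_C\backslash\{\overline{v}\mid v\in V\}$, so that the exclusion is vacuous and the monomial-algebra machinery applies cleanly. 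Everything else is a verbatim transfer, so the proof itself should be short, essentially stating that an acyclic truncated quiver algebra is an acyclic complete monomial algebra and then citing Theorem \ref{tm 4.7}.
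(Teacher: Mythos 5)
Your reduction hinges on the claim that $k\Gamma/k^n\Gamma$ is a \emph{complete} monomial algebra, and that claim is false in general. The paper's completeness condition quantifies over \emph{all} parallel paths, with no length restriction: $p\in I$ and $p'\parallel p$ must force $p'\in I$. Since membership in $k^n\Gamma$ is decided purely by length, completeness fails as soon as $\Gamma$ contains parallel paths whose lengths straddle $n$. Concretely, take $\Gamma$ with vertices $1,2,3$, arrows $\alpha\colon 1\to 2$, $\beta\colon 2\to 3$, $\gamma\colon 1\to 3$, and $n=2$. This quiver is planar and acyclic, so $k\Gamma/k^2\Gamma$ satisfies every hypothesis of Theorem \ref{thm 5.3}, yet $\alpha\beta\in k^2\Gamma$ while the parallel arrow $\gamma\notin k^2\Gamma$: the algebra is not complete, and Theorem \ref{tm 4.7} simply does not apply to it. Your verification of completeness (``any parallel $p'$ that is itself a single path of the same or greater length again lies in $k^n\Gamma$'') silently restricts attention to parallel paths of length $\geq n$, which is exactly the case that needs no checking. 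So Theorem \ref{thm 5.3} is not a special case of Theorem \ref{tm 4.7}; the two results have overlapping but incomparable scopes, which is why the paper states and proves them separately.

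What is sound in your plan is the final mechanism: everything works once one knows $\mathfrak{F}_2(k^n\Gamma)=\mathfrak{D}_2$, since then $HH^1(k\Gamma/k^n\Gamma)\cong\mathfrak{F}_2(k^n\Gamma)/\mathfrak{D}_C=\mathfrak{D}_2/\mathfrak{D}_C\cong H^1(k\Gamma,k\Gamma/k^n\Gamma)$, and Theorem \ref{thm 3.7} finishes the argument. But the correct source for $\mathfrak{F}_2(k^n\Gamma)=\mathfrak{D}_2$ is not completeness; it is Proposition \ref{thm 4.4} and the length argument behind it: for $r\in E$ and $\overline{s}\in\mathscr{Q}$ with $s\notin V$, replacing an occurrence of $r$ in a path of length $\geq n$ by $s$ yields a path of length $\geq n$, so $D_{r,\overline{s}}(k^n\Gamma)=\overline{0}$; and acyclicity of $k\Gamma/k^n\Gamma$ rules out loops in $\Gamma$, so the excluded operators $D_{r,\overline{h(r)}}$ (those with $s\in V$) never occur in $\mathfrak{B}_2$. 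This is exactly the paper's route: it invokes the acyclicity to get $\mathfrak{D}_C=\mathfrak{D}_V$, transports $\mathfrak{D}_C$, $\mathfrak{D}_E$, $\mathfrak{D}_{\mathbb{P}}$ to the quotient via Lemma \ref{lemma 5.2}, and repeats the proof of Theorem \ref{thm 3.7}. Your proposal correctly anticipates the loop/$s\notin V$ subtlety, but that issue is a red herring in the acyclic case; the genuine gap is the unjustified (and false) completeness claim on which your citation of Theorem \ref{tm 4.7} rests.
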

\begin{proof}
Since $k\Gamma/I$ is acyclic, $\mathfrak{D}_{C}=\mathfrak{D}_{V}$. By Lemma \ref{lemma 5.2}, $\overline{\mathfrak{D}}_{C}\cong\mathfrak{D}_{C}$, $\overline{\mathfrak{D}}_{E}\cong\mathfrak{D}_{E}$, $\overline{\mathfrak{D}}_{\mathbb{P}}\cong\mathfrak{D}_{\mathbb{P}}$. So the result can be obtained in the same way as the proof of Theorem \ref{thm 3.7}.
\end{proof}

\end{document}